\newcounter{hours}\newcounter{minutes}
\newcommand\printtime{%
 \setcounter{hours}{\time/60}%
 \setcounter{minutes}{\time-\value{hours}*60}%
\ifthenelse{\value{hours}<10}{0\thehours}{\thehours}
\ifthenelse{\value{minutes}<10}{:0\theminutes}{:\theminutes}}
\renewcommand{\phi}{\varphi}
\renewcommand{\epsilon}{\varepsilon}
\renewcommand{\theta}{\vartheta}
\newcommand{\C}[1]{\mathbf{C^{#1}}}
\newcommand{\reali}{{\mathbb{R}}}
\newcommand{\IV}{\rm IV}
\DeclareMathOperator{\argmin}{argmin}
\newtheorem{theorem}{Theorem}[section]
\newtheorem{proposition}[theorem]{Proposition}
\newtheorem{lemma}[theorem]{Lemma}
\newtheorem{definition}[theorem]{Definition}
\newtheorem{example}[theorem]{Example}
\numberwithin{equation}{section}
\begin{document}
\title[Braess Paradox]{On the Braess Paradox \\ with Nonlinear
  Dynamics and Control Theory}

\author[Colombo]{Rinaldo M.~Colombo} \address[Colombo]{\newline INDAM
  Unit, University of Brescia, Via Branze 38, I--25123 Brescia,
  Italy}
\email[]{\href{Rinaldo.Colombo@Ing.UniBs.It}{Rinaldo.Colombo@Ing.UniBs.It}}
\urladdr{\href{http://dm.ing.unibs.it/rinaldo/}{http://dm.ing.unibs.it/rinaldo/}}

\author[Holden]{Helge Holden} \address[Holden]{\newline Department of
  Mathematical Sciences, Norwegian University of Science and
  Technology, NO--7491 Trondheim, Norway,\newline {\rm and} \newline
  Centre of Mathematics for Applications,
  University of Oslo, P.O.\ Box 1053, Blindern, NO--0316 Oslo, Norway
} \email[]{\href{holden@math.ntnu.no}{holden@math.ntnu.no}}
\urladdr{\href{http://www.math.ntnu.no/~holden}{www.math.ntnu.no/\textasciitilde holden}}


\subjclass[2010]{Primary: 35L65; Secondary: 90B20}

\keywords{Braess paradox, traffic dynamics, hyperbolic conservation
  laws, Nash optimum, control theory}

\thanks{Partially supported by the Research Council of Norway and by
  the Fund for International Cooperation of the University of
  Brescia.}


\begin{abstract}
  We show the existence of the Braess paradox for a traffic network
  with nonlinear dynamics described by the Lighthill--Whitham-Richards
  model for traffic flow. Furthermore, we show how one can employ
  control theory to avoid the paradox. The paper offers a general
  framework applicable to time-independent, uncongested flow on
  networks. These ideas are illustrated through examples.
\end{abstract}

\maketitle

\section{Introduction}
\label{sec:intro}

Consider the following scenario: We have a simple network consisting
of two routes connecting $A$ to $B$, see Figure~\ref{fig:4Roads}.
\begin{figure}[h!]
  \centering
  \includegraphics[width = 0.6\textwidth]{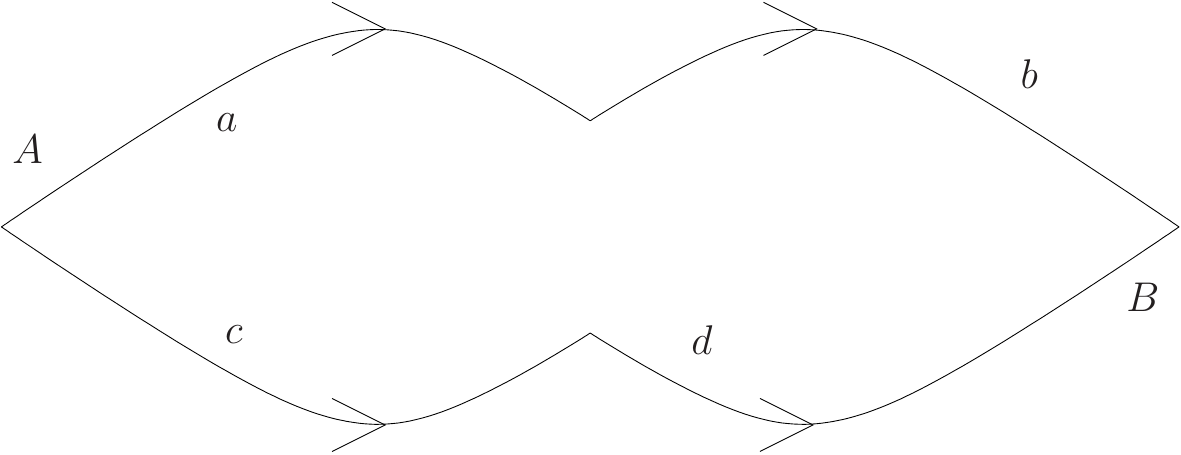}
  \caption{Network consisting of two routes connecting $A$ to $B$. The
    route $\alpha$ consists of the roads $a$ and $b$, the route
    $\beta$ consists of the roads $c$ and $d$.}
  \label{fig:4Roads}
\end{figure}
Each route consists of two roads. Roads $a$ and $d$ are identical, as
are roads $b$ and $c$. Traffic is unidirectional in the direction from
$A$ to $B$. Travel time along roads $a$ and $d$ are given by
$\rho/100$, where $\rho$ is the number of vehicles on that road, while
the travel time is $45$ for each of roads $b$ and $c$, irrespective of
the number of vehicles on that road.  In equilibrium, vehicles will
distribute evenly between the two routes connecting $A$ and $B$, i.e.,
roads $a$ \& $d$ and $b$ \& $c$.  Assuming that initially $m=4000$
vehicles start from $A$, we find a travel time of $65$ along each of
the two routes. Add a road $e$ as given in Figure~\ref{fig:5Roads},
and assume that the travel time is zero along this road.
\begin{figure}[h!]
  \centering
  \includegraphics[width = 0.6\textwidth]{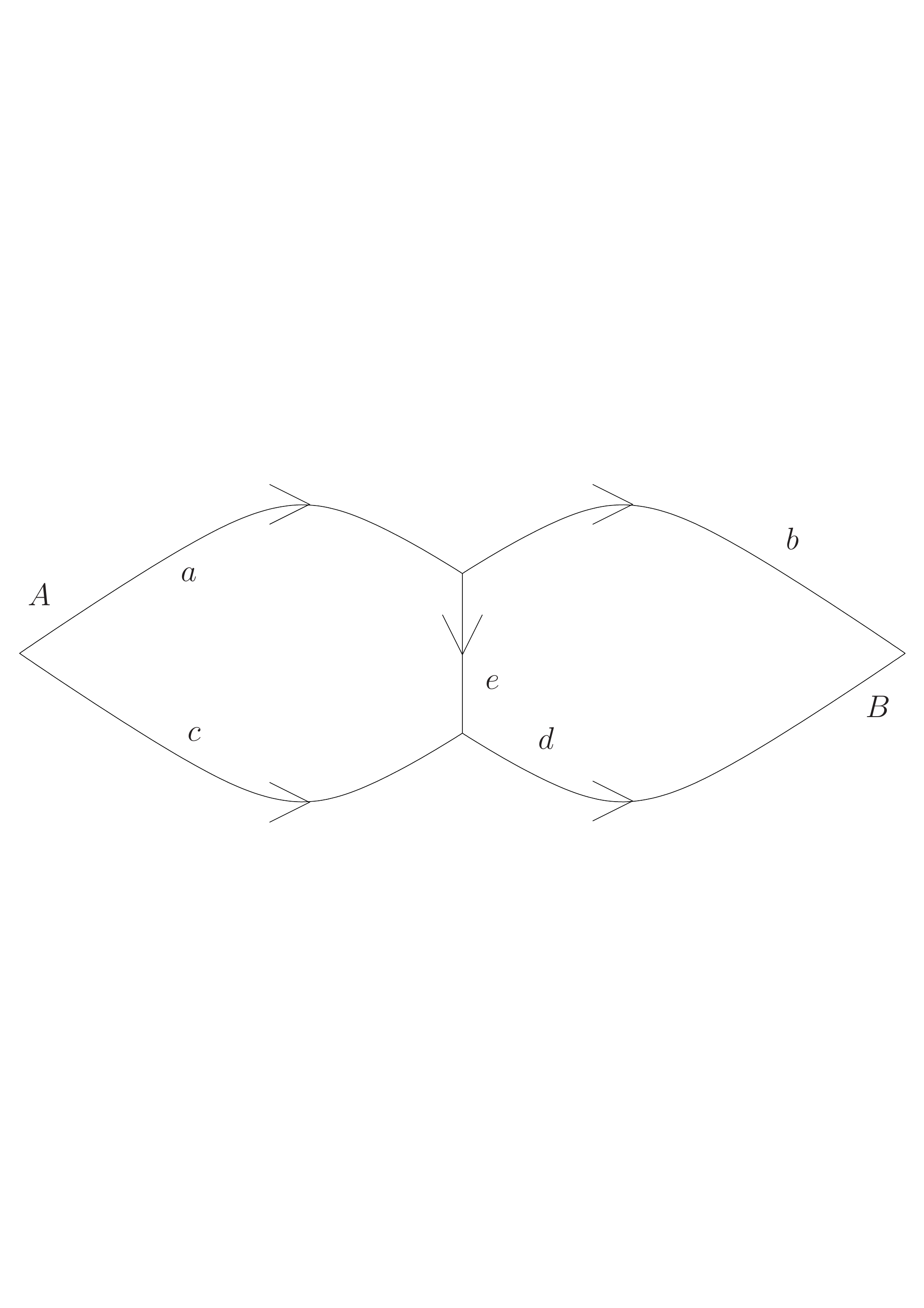}
  \caption{A network consisting of three routes $\alpha$, $\beta$, and
    $\gamma$ connecting $A$ to $B$. The route $\alpha$ comprises the
    roads $a$ and $b$, the route $\beta$ comprises the roads $c$ and
    $d$ and, finally, the route $\gamma$ consists of the roads $a$,
    $e$, and $d$.}
  \label{fig:5Roads}
\end{figure}
Drivers will start using the new road, reducing their travel time from
$65$ to $40$. However, as more and more drivers use the new road,
their travel time will increase to $80$. Now, no driver will have an
incentive to use the old roads, i.e., avoiding road $e$, as the travel
time along those roads will be $85$. Thus all drivers are worse off
than before, in spite of having a new road. This is the Braess paradox
in a nutshell: \textit{Adding a new road to a network may make travel
  times worse for all.} In both cases the equilibrium is a
\textit{Wardrop equilibrium} (i.e., all routes used have the same
travel time, and all unused routes have longer travel times) as well
as a \textit{Nash equilibrium}.


This is the simplest example of the Braess paradox, introduced (with a
different example) by Braess in 1968~\cite{BraessParadox}, see
also~\cite{NagurneyBoyce}. This example and some generalizations have
been studied in, e.g.,
\cite{Frank,HagstromAbrams,RoughgardenTardos}. In spite of the
unrealistic assumptions in the prevalent example above, the paradox
has turned out to be ubiquitous and intrinsic to dynamical networks.
The paradox also appears in other situations not modeling traffic
flow~\cite{SteinbergZangwill}, see, e.g.,~\cite{Pala} for an example
involving mesoscopic electron systems, and~\cite{CohenHorowitz} for an
example with mechanical springs. Furthermore, the paradox can be
reformulated in the context of game theory. In addition, there are
well documented examples of the paradox occurring in real-life traffic
situations, e.g., in Seoul~\cite{Baker} and
Stuttgart~\cite[pp. 57--59]{Knodel}, see also~\cite{YounGastnerJeong}.
Not surprisingly, the paradox has been well described also in general
media, see, e.g., \cite{Kolata, ArnottSmall, Vidal} and on Wikipedia
as well as YouTube. The extensive discussion about the Braess paradox
makes a complete reference list impossible, see, however,
\cite{Easley, Roughgarden,Roughgarden_paper}.  In this paper we only
refer to articles directly related to the research at hand.

Here we want to study the Braess paradox with a more realistic
nonlinear dynamics. More specifically, we want to model unidirectional
traffic along roads by a macroscopic model where only densities of
vehicles are considered. We believe this to be novel.  In this class
of models, introduced by Lighthill--Whitham~\cite{LighthillWhitham}
and Richards~\cite{Richards} (hereafter denoted the LWR model),
vehicles, described by a density $\rho$ rather than individually,
drive with a velocity determined by the density alone; higher density
yields slower speed while low density lets vehicles approach the speed
limit. At a maximum density with bumper-to-bumper vehicles, traffic
comes to a halt.  The dynamics is well described by the nonlinear
partial differential equation
\begin{equation}
  \label{eq:CL}
  \partial_t\rho+\partial_x\left(\rho \, v(\rho)\right)=0,
\end{equation}
see, e.g., \cite[pp. 11--18]{HoldenRisebro}.  The function
$q(\rho)=\rho v(\rho)$ is denoted the flux function, or, in the
context of traffic flow, the fundamental diagram. It is in general a
concave function that equals zero when $\rho$ vanishes and when $\rho$
equals the maximum possible road density.  Hyperbolic conservation
laws, as equations of the type~\eqref{eq:CL} are called, have been
used to study traffic on a network, starting with Holden and
Risebro~\cite{HoldenRisebro_network}, see, e.g., the book by Garavello
and Piccoli~\cite{GaravelloPiccoli}. Related results on a game
theoretic approach to network traffic through the LWR model,
see~\cite{BressanHan2012, BressanHan2013}. For general theory
concerning hyperbolic conservation laws we refer
to~\cite{HoldenRisebro}.

However, the Braess paradox describes an equilibrium situation, and it
is not relevant to include time variation. Rather, we want to study
stationary solutions where the velocity is a given function of the
density of vehicles on the road. At a junction, the differential
equation~\eqref{eq:CL} will in general, if the two roads have
different properties, establish a complicated wave pattern, creating
waves that emanate from the junction in both directions.  However, in
the equilibrium situation, this cannot happen, as it would create
time-dependent waves. Thus, we will set up the example in such a way
that no waves are created at junctions.

In this paper we analyze the same simple network as described above,
but with much more realistic dynamics. More general examples are of
course possible using the same methods. However, calculations become
more cumbersome and less transparent, and we here focus on presenting
the ideas of the model, exemplified on the simple network in
Figures~\ref{fig:4Roads} and~\ref{fig:5Roads}. For another approach to
the Braess paradox, see, e.g., \cite{DafermosNagurney}.

The prevalence of the Braess paradox is unwanted, and one would like
to take measures to prevent its occurrence. In the example in the
present paper, we use the velocity of the road $e$ as a control
parameter. By properly adjusting the speed limit on road $e$, one can
force the Braess paradox to disappear, and make the social optimum
coincide with the Nash equilibrium.

This can be illustrated in the simple example in the beginning of the
introduction. Given a ``benevolent dictator'' who wants to reduce the
total travel time and reach the social optimum, a short calculation
shows that, with $m=4000$, 1750 vehicles should follow each of the
routes $a$ \& $b$ and $c$ \& $d$, and the remaining $500$ vehicles
should follow the route $a$, $e$, and $d$.  Although a social optimum,
this situation is neither a Wardrop nor a Nash equilibrium.

This paper offers a framework applicable to general networks. The
input is, in addition to the network itself, the length and velocity
fields of each road as well as the influx.  We assume that traffic is
in the uncongested, or free, phase. This will prevent waves from
emanating from the junctions.

\section{A dynamic version of the Braess paradox}
\label{eq:Dyn}

\subsection{Notation and basic definitions}

Below, we denote $\reali^+ = [0, +\infty)$ and $S^n = \{\theta \in
[0,1]^{n} \mid \sum_j \theta_j \leq 1\}$ is the standard simplex in
$\reali^n$. The sphere centered at $\theta$ with radius $r$ is denoted
by $B_r (\theta)$.

Two points $A$ and $B$ are connected through a network of roads. Along
each road, traffic is described through the LWR
model~\eqref{eq:CL}. At each junction, the total flow exiting the
junction equals the incoming one, so that the total quantity of
vehicles is conserved.

The macroscopic description obtained solving~\eqref{eq:CL} along each
road also provides the full microscopic portrait of the
network. Indeed, once $\rho = \rho (t,x)$ is known along the road $r$
connecting, say, the junction at $A$ to that at $B$, the single
vehicle leaving from $A$ at time $t_o$ travels along $r$ according to
\begin{equation}
  \label{eq:ParticlePath}
  \left\{
    \begin{array}{l}
      \dot x = v\big(\rho(t, x (t))\big),
      \\
      x (t_o) = A \,.
    \end{array}
  \right.
\end{equation}
The travel time $\tau_r (t_o)$ along the road $a$ is then implicitly
defined by
\begin{equation}
  \label{eq:tau1}
  x \left(\tau_r (t_o)\right) = B \,.
\end{equation}
To compute $\tau_r (t_o)$, in general, one has first to
provide~\eqref{eq:CL} with initial and boundary data, then solve the
resulting initial-boundary value problem to obtain $\rho = \rho
(t,x)$, use this latter expression to solve the ordinary differential
equation~\eqref{eq:ParticlePath} and finally solve the
equation~\eqref{eq:tau1}. Observe that the right-hand side in the
ordinary differential equation in~\eqref{eq:ParticlePath} is in
general discontinuous, nevertheless in the present setting it is
well-posed, see~\cite{ColomboMarson}. In the present stationary
framework, this procedure can be pursued explicitly, as we detail
below in Example~\ref{ex:Simple}. Remark that, in a stationary regime,
all travel times are independent of the starting time $t_o$.

For the above travel times to be a reliable measure of the network
efficiency, it is necessary that they are independent from any
particular initial data. Also the standard initial-boundary value
problem for~\eqref{eq:CL} with zero initial density on the whole
network is unsatisfactory, since it would give results that depend on
the transient period necessary to fill the network. We are thus bound
to select \emph{stationary} solutions, assigning a constant inflow at
$A$ for all times $t \in \reali$. Moreover, to allow for stationary
solutions, we also assume that the total flow incoming at any junction
never exceeds the total capacity of the roads exiting that junction.

In the general LWR model~\eqref{eq:CL}, the flux function $q = q
(\rho)$ is a concave function that vanishes at zero density and at
$\rho_M$, the maximum density. The flux has a unique maximum for some
value $\rho_m \in (0,\rho_M)$. As usual, we refer to densities below
$\rho_m$ as the uncongested, or free, phase, and for densities above
$\rho_m$ as the congested phase. In the remaining part of the paper,
to obtain stationary solutions, we need to remain in the free phase
only, so that $\rho \in [0, \rho_m]$ throughout the network. In order
to simplify the notation we will use the normalization $\rho_m = 1$
for all roads. We will not make any assumptions on, or reference to,
$q$ above this value. Hence, on the flow function we pose the
following assumption:
\begin{description}
\item[(q)] $q \in \C3 ([0,1]; \reali^+)$, $q (0) = 0$, $q' > 0$ and
  $q'' \leq 0$.
\end{description}
\noindent Clearly, if $q$ satisfies~\textbf{(q)}, then the speed law
$v (\rho) = q (\rho) / \rho$ is well-defined, continuous, strictly
positive and weakly decreasing, see Lemma~\ref{lem:Speed}. As a
result, the travel along a road segment is a convex and increasing
function of the inflow.

\begin{lemma}
  \label{lem:TravelTime}
  Let $q$ satisfy~\textbf{(q)} with $q''' \leq 0$ and call $\phi = q
  (1)$. Then, the travel time $\tau (\theta)$, which is defined by $
  x\left(\tau (\theta)\right) = B$ where
  \begin{equation*}
    \mbox{ $x$ solves}  \quad
    \begin{cases}
      \dot x = v\left(\rho (t, x (t))\right),
      \\
      x (0) = A,
    \end{cases}
    \quad
    \mbox{ and }  \quad
    \mbox{$\rho$ solves}  \quad
    \begin{cases}
      \partial_t \rho + \partial_x q (\rho) = 0,
      \\
      q\left(\rho(t, A)\right) = \theta\phi,
    \end{cases}
  \end{equation*}
  is of class $\C2 ([0, 1]; \reali^+)$, weakly increasing and convex.
\end{lemma}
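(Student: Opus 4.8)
The plan is to turn the PDE--ODE system into a closed-form expression for $\tau$ and then extract the three claimed properties by elementary convexity bookkeeping, with the extra hypothesis $q''' \le 0$ entering only at the convexity step.

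\emph{Step 1: the stationary profile is constant, and $\tau$ has an explicit form.} Since we seek a stationary solution of $\partial_t\rho + \partial_x q(\rho) = 0$, we have $\partial_t\rho\equiv 0$, so $q\bigl(\rho(x)\bigr)$ is independent of $x$; the boundary condition $q\bigl(\rho(\cdot,A)\bigr) = \theta\phi$ then forces $q\bigl(\rho(t,x)\bigr)\equiv\theta\phi$ along the whole road. By \textbf{(q)} the map $q\colon[0,1]\to[0,\phi]$ is a $\C3$ increasing bijection, and $\theta\phi\in[0,\phi]$ because $\theta\in[0,1]$, hence $\rho(t,x)\equiv\bar\rho(\theta):=q^{-1}(\theta\phi)\in[0,1]$, so the solution stays in the free phase and no wave is created at the endpoints. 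The characteristic in \eqref{eq:ParticlePath} is then the straight line $x(t)=A+t\,v\bigl(\bar\rho(\theta)\bigr)$, and solving $x\bigl(\tau(\theta)\bigr)=B$ yields
\begin{equation*}
  \tau(\theta)\;=\;\frac{B-A}{v\bigl(\bar\rho(\theta)\bigr)},\qquad \bar\rho(\theta)=q^{-1}(\theta\phi).
\end{equation*}

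\emph{Step 2: regularity.} Since $q'>0$, the inverse function theorem gives $q^{-1}\in\C3([0,\phi];[0,1])$, so $\bar\rho\in\C3([0,1];[0,1])$. Writing $v(\rho)=\int_0^1 q'(s\rho)\,ds$ exhibits $v$ as a $\C2$ function on $[0,1]$ with values in $\reali^+$, and $v>0$ on the compact $[0,1]$ forces $v\ge v_{\min}>0$; therefore $\tau$, being a quotient of $\C2$ maps with nonvanishing denominator, lies in $\C2([0,1];\reali^+)$.

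\emph{Step 3: monotonicity and convexity.} I would factor $\tau=T\circ\bar\rho$ with $T(\rho):=(B-A)/v(\rho)$ and show both factors are increasing and convex, so that $(T\circ\bar\rho)''=T''(\bar\rho)\,(\bar\rho')^2+T'(\bar\rho)\,\bar\rho''\ge 0$ and likewise the first derivative is nonnegative. That $\bar\rho$ is increasing and convex follows from $q$ being increasing and concave: differentiating $q(q^{-1}(y))=y$ twice gives $(q^{-1})'=1/q'(q^{-1})>0$ and $(q^{-1})''=-q''(q^{-1})\,\bigl((q^{-1})'\bigr)^2/q'(q^{-1})\ge 0$, and $\theta\mapsto\theta\phi$ is affine. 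For $T$: from $v'(\rho)=\int_0^1 s\,q''(s\rho)\,ds\le 0$ and $v>0$ we get $T'=-(B-A)\,v'/v^2\ge 0$, and from $v''(\rho)=\int_0^1 s^2\,q'''(s\rho)\,ds\le 0$ — the sole place the hypothesis $q'''\le 0$ is invoked — together with $v>0$,
\begin{equation*}
  T''(\rho)\;=\;(B-A)\,\frac{2\,\bigl(v'(\rho)\bigr)^2-v(\rho)\,v''(\rho)}{v(\rho)^3}\;\ge\;0 .
\end{equation*}
This closes the argument.

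The genuinely delicate point is the sign of $v''$: without $q'''\le 0$ one cannot conclude that the single-road travel time $T$ is convex in the density, and it is precisely the concavity of $v$ that propagates through the composition with the convex increasing map $\bar\rho$ to yield convexity of $\tau$ in the inflow $\theta$. Everything else is bookkeeping, apart from the (standard, and consistent with the paper's free-phase standing assumption) fact used in Step~1 that a stationary free-phase solution with prescribed inflow flux is simply the corresponding constant state.
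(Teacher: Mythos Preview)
Your argument is correct, and the route you take is genuinely different from the paper's. The paper reduces the lemma to Lemma~\ref{lem:PropertiesV}, whose proof computes the second $\theta$-derivative of $1/v(\rho(\theta))$ directly; the resulting expression is controlled via an auxiliary function $f(\rho)=\tfrac12\,(q/q')^2 q''+q-\rho q'$, whose nonpositivity is obtained by showing $f(0)=0$ and $f'\le 0$, the latter being the place where $q'''\le 0$ enters. You instead factor $\tau=T\circ\bar\rho$ with $T(\rho)=(B-A)/v(\rho)$ and handle each factor separately: the integral representation $v(\rho)=\int_0^1 q'(s\rho)\,ds$ makes both $v'\le 0$ and, under $q'''\le 0$, $v''\le 0$ immediate, so $T''=(B-A)\bigl(2(v')^2-v\,v''\bigr)/v^3\ge 0$; combined with the increasing convex $\bar\rho=q^{-1}(\cdot\,\phi)$ (which is the content of the paper's Lemma~\ref{lem:PropertiesRho}), the chain rule closes the argument. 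Your decomposition is cleaner and sidesteps the auxiliary $f$ entirely; the paper's computation, though heavier, works entirely in the variable $\theta$ and does not rely on the integral representation of $v$.
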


\noindent The proof follows directly from Lemma~\ref{lem:PropertiesV}.

\medskip

When $\gamma$ is a route consisting of the adjacent roads $r_1, r_2,
r_3, \ldots$, the travel time $\tau_\gamma (t_o)$ along $\gamma$ is
then defined as the sum $\sum_i \tau_{r_i}$ of the travel times of all
roads.

A network consists of several routes connecting $A$ to $B$. To
describe it, we enumerate each single road (or edge) and construct the
matrix $\Gamma$ setting
\begin{equation*}
  \Gamma_{ij} =
  \begin{cases}
    1 & \text{the road $r_i$ belongs to the route $\gamma_j$},
    \\
    0 & \text{otherwise.}
  \end{cases}
\end{equation*}
We now assign a constant total inflow $\phi$ at $A$ and call
$\theta_i$ the fraction of the drivers that reach $B$ along the route
$\gamma_j$.

A single road may well belong to more than one route, so that the flow
along the road $r_i$ is $\phi\Gamma_i \theta =\phi \sum_i \Gamma_{ij}
\theta_j$ and the travel time along that road results to be
$\tau_{r_i} (\Gamma_i \theta)$.  The total travel time $\tau_i$ along
the $i$th route is in general a function of all partition parameters,
more precisely
\begin{displaymath}
  \tau_{\gamma_j} (\theta)
  =
  \sum_i \Gamma_{ij} \; \tau_{r_i} \! ( \Gamma_i\theta).
\end{displaymath}

From a global point of view, it is natural to evaluate the quality of
a network through the mean global travel time\footnote{Also called
  \textit{average latency} of the system or \textit{social cost} of
  the network.} $T (\theta) = \sum_j \theta_j \, \tau_{\gamma_j}
(\theta)$ or, using matrix notation $\tau_r (\Gamma\theta) =
[\tau_{r_1} (\Gamma_1\theta)\, \cdots \, \tau_{r_n}
(\Gamma_n\theta)]$, we find
\begin{equation}
  \label{eq:MeanGlobalTravelTime}
  T (\theta)
  =
  \tau_r (\Gamma\theta) \; \Gamma \; \theta \,.
\end{equation}
We call \emph{globally optimal\footnote{Also called \textit{social
      optimum} for the system.}} a state $\theta_G \in S^n$ that
minimizes $T$ over $S^n$, i.e., $\theta_G = \argmin_{\theta \in S^n} T
(\theta)$. This \emph{social optimum} state conforms to Wardrop's
\emph{Second principle}, see~\cite[p.~345]{Wardrop}.

\begin{proposition}
  \label{prop:Convexity}
  Let all road travel times $\tau_{r_1}, \ldots, \tau_{r_m}$ be of
  class $\C2 ([0,1]; \reali^+)$, weakly increasing and convex. Then,
  the map $T$ is in $\C2 ([0,1];\reali^+)$ is convex.
\end{proposition}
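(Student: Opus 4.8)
The plan is to exhibit $T$ as a finite sum of one-variable $\C2$ convex functions precomposed with linear maps, and then to invoke the standard stability of convexity (and of $\C2$-regularity) under composition with affine maps and under summation.

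First, expanding the matrix product in \eqref{eq:MeanGlobalTravelTime} entrywise,
\[
  T (\theta) = \sum_{i=1}^m (\Gamma_i \theta)\; \tau_{r_i} (\Gamma_i \theta) = \sum_{i=1}^m g_i (\Gamma_i \theta), \qquad \text{where } g_i (s) := s\, \tau_{r_i} (s),
\]
and each map $\theta \mapsto \Gamma_i \theta = \sum_j \Gamma_{ij}\theta_j$ is linear with coefficients in $\{0,1\}$. For $\theta \in S^n$ one has $0 \le \Gamma_i \theta \le \sum_j \theta_j \le 1$, so the argument of $g_i$ stays in $[0,1]$, which is exactly the interval on which $\tau_{r_i}$ — and hence $g_i$ — is $\C2$. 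Consequently each summand $\theta \mapsto g_i (\Gamma_i \theta)$ is $\C2$ on $S^n$, and so is $T$, with $T \ge 0$ because every $g_i \ge 0$.

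Next I would verify that each $g_i$ is convex on $[0,1]$. Differentiating, $g_i' (s) = \tau_{r_i} (s) + s\, \tau_{r_i}' (s)$ and $g_i'' (s) = 2\, \tau_{r_i}' (s) + s\, \tau_{r_i}'' (s)$. The hypothesis that $\tau_{r_i}$ is weakly increasing gives $\tau_{r_i}' \ge 0$, the hypothesis that it is convex gives $\tau_{r_i}'' \ge 0$, and $s \ge 0$ on $[0,1]$; hence $g_i'' \ge 0$, i.e.\ $g_i$ is convex. Since the composition of a convex function with an affine map is convex and a sum of convex functions is convex, $T$ is convex.

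There is no genuinely hard step: the only point that needs a moment's care is the domain bookkeeping — one must work on $S^n$ (rather than the full cube $[0,1]^n$) so that the affine images $\Gamma_i\theta$ remain in $[0,1]$, where the one-dimensional hypotheses on the $\tau_{r_i}$ are available; this is also where the nonnegativity of $s$, needed to control the sign of the term $s\, \tau_{r_i}''(s)$ in $g_i''$, comes from.
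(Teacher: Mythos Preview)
Your proof is correct and follows essentially the same route as the paper: both rewrite $T(\theta)=\sum_i g_i(\Gamma_i\theta)$ with $g_i(s)=s\,\tau_{r_i}(s)$, argue that $g_i$ is convex because $\tau_{r_i}$ is increasing and convex, and conclude by stability of convexity under affine precomposition and summation. Your version is in fact a bit more careful than the paper's on the domain issue (restricting to $S^n$ so that $\Gamma_i\theta\in[0,1]$), and you verify $g_i''\ge 0$ explicitly rather than just asserting the convexity of $x\mapsto x f(x)$.
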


\noindent The proof is deferred to the Appendix.

For brevity, we call \emph{relevant} those travel times $\tau_i$ such
that $\theta_i \neq 0$.

\begin{definition}
  \label{def:Eq}
  A state $\bar \theta \in S^n$ is an \emph{equilibrium state} if all
  relevant travel times coincide, i.e., for all $i,j \in \{1, \ldots,
  n\}$
  \begin{displaymath}
    \mbox{ if } \bar \theta_i \neq 0 \mbox{ and } \bar \theta_j \neq 0,
    \mbox{ then } \tau_i(\bar\theta) = \tau_j(\bar\theta)  = \bar \tau\,,
  \end{displaymath}
  the common value $\bar \tau$ of the travel times being the
  \emph{equilibrium time}.
\end{definition}
In other words, at equilibrium all drivers need the same time to go
from $A$ to $B$. A common criterion for optimality goes back to
Pareto.

\begin{definition}
  \label{def:Pareto}
  An equilibrium state $\theta^P \in S^n$ is a \emph{local Pareto
    point} if there exists a positive $\delta$ such that for all
  $\theta \in B_\delta (\theta^P) \cap S^n$ if there exists a $j$ such
  that $\tau_{\gamma_j} (\theta) < \tau_{\gamma_j} (\theta^P)$, then
  there exists also a $k$ such that $\tau_{\gamma_k} (\theta) >
  \tau_{\gamma_k} (\theta^P)$.
\end{definition}

In other words, no (small) perturbation of a Pareto point may reduce
all travel times.

However, from a \emph{``selfish''} point of view, each driver aims at
reducing his/her own travel time.  It is then natural to introduce the
following definition.

\begin{definition}
  \label{def:Nash}
  An equilibrium state $\theta^N \in S^n$ is a \emph{local Nash point}
  if there exists a positive $\delta$ such that for all $\epsilon \in
  (0, \delta]$ and all $j,k = 1, \ldots, n$,
  \begin{displaymath}
    \mbox{if } \theta^N + \epsilon e_j - \epsilon e_k\in S^n
    \mbox{, then }
    \tau_{\gamma_j} (\theta^N + \epsilon e_j - \epsilon e_k)
    >
    \tau_{\gamma_k} (\theta^N) \,,
  \end{displaymath}
  where $e_j$ is the unit vector directed along the $j$th axis.
\end{definition}

In other words, it is not convenient for $\epsilon$ drivers to change
from route $k$ to route $j$, for any $j,k =1,\dots, n$.

\medskip

\begin{example}
  \label{ex:Simple}
  Consider the simple case of the network in Figure~\ref{fig:Simple},
  and assume that its dynamics is described as follows:

  \smallskip\noindent
  \begin{tabular}{@{}ccccc@{}}
    Road & Length & Density & Model & Flow
    \\[2pt]
    $a$ & $3/2$ & $\rho$ & $\partial_t \rho + \partial_x \left(\rho \,
      v (\rho)\right) = 0$ & $q (\rho) =
    \left(-1+\sqrt{1+8\rho}\right)/4$
    \\[2pt]
    $b$ & $1$ & $R$ & $\partial_t R + \partial_x \left(R \, V
      (R)\right) = 0$ & $Q (R) = -1 + \sqrt{1+R}$
  \end{tabular}
  \smallskip

  The maximal inflow $\phi$ at $A$ that, for any $\theta \in [0,1]$,
  can be partitioned in $\theta\,\phi$ along $a$ and $(1-\theta)\phi$
  along $b$ is $\min\left\{q (1),Q (1)\right\} = \sqrt{2}-1$.
  \begin{figure}[h!]
    \centering
    \includegraphics[width = 0.6\textwidth]{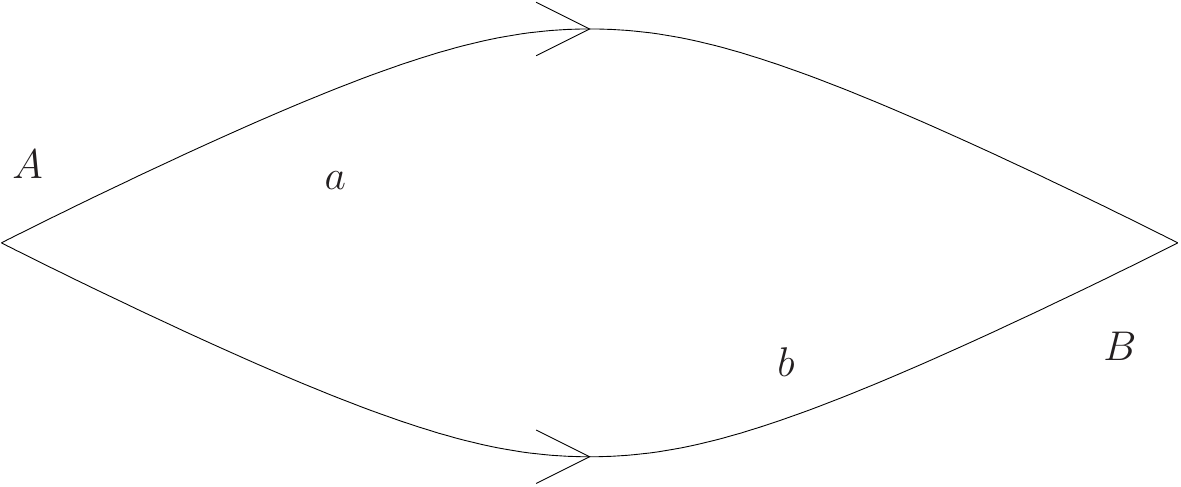}
    \caption{A simple network connecting $A$ to $B$ where the globally
      optimal state differs from the Nash optimal one.}
    \label{fig:Simple}
  \end{figure}
  With this constant inflow as left boundary data in~\eqref{eq:CL},
  the resulting (stationary) densities are
  \begin{displaymath}
    \text{$\rho = (1 + 2 \, \theta\, \phi) \, \theta \, \phi$
      along road $a$, and
      $R = \left(2 + (1-\theta) \phi \right) (1-\theta) \, \phi$
      along road $b$} \,.
  \end{displaymath}
  The corresponding constant traffic speeds
  \begin{displaymath}
    \text{$v (\rho) = (1+2\, \theta \, \phi) ^{-1}$ along road $a$,
      and
      $V (R) = (2 + (1-\theta) \phi)^{-1}$ along road $b$},
  \end{displaymath}
  inserted in~\eqref{eq:ParticlePath}, lead to the following travel
  times on the two roads:
  \begin{displaymath}
    \text{$\tau_a (\theta)
      =
      3(1 + 2\,\theta\,\phi)/2$ along road $a$, and
      $\tau_b (1-\theta)
      =
      2 + (1-\theta) \phi$ along road $b$} \,.
  \end{displaymath}
  Finally, the mean global travel time defined
  at~\eqref{eq:MeanGlobalTravelTime} is
  \begin{displaymath}
    T (\theta)
    =
    2 + \phi - \frac{1+4\,\phi}{2}\, \theta + 4 \, \theta^2 \, \phi\,.
  \end{displaymath}
  According to Definition~\ref{def:Nash}, we have a unique Nash point
  at $\theta^N$ and a unique globally optimal state at $\theta_G$,
  where
  \begin{displaymath}
    \theta^N =
    \begin{cases}
      0,  & \phi \in [0, 1/6),     \\
      \frac{1+2\, \phi}{8\, \phi}, & \phi \in [1/6, \sqrt{2}-1],
    \end{cases}
    \qquad
    \theta_G =
    \begin{cases}
      0, & \phi  \in  [0, 1/12),     \\
      \frac{1+4\, \phi}{16\, \phi}, & \phi \in [1/12, \sqrt{2}-1].
    \end{cases}
  \end{displaymath}
  Clearly, $\theta^N$ is also a Pareto point according to
  Definition~\ref{def:Pareto}. Note that the globally optimal state
  may well differ from the Nash optimal one and both depend on the
  total inflow $\phi$, see Figure~\ref{fig:SimpleNum}.
  \begin{figure}[h!]
    \centering
    \includegraphics[width=0.75\textwidth]{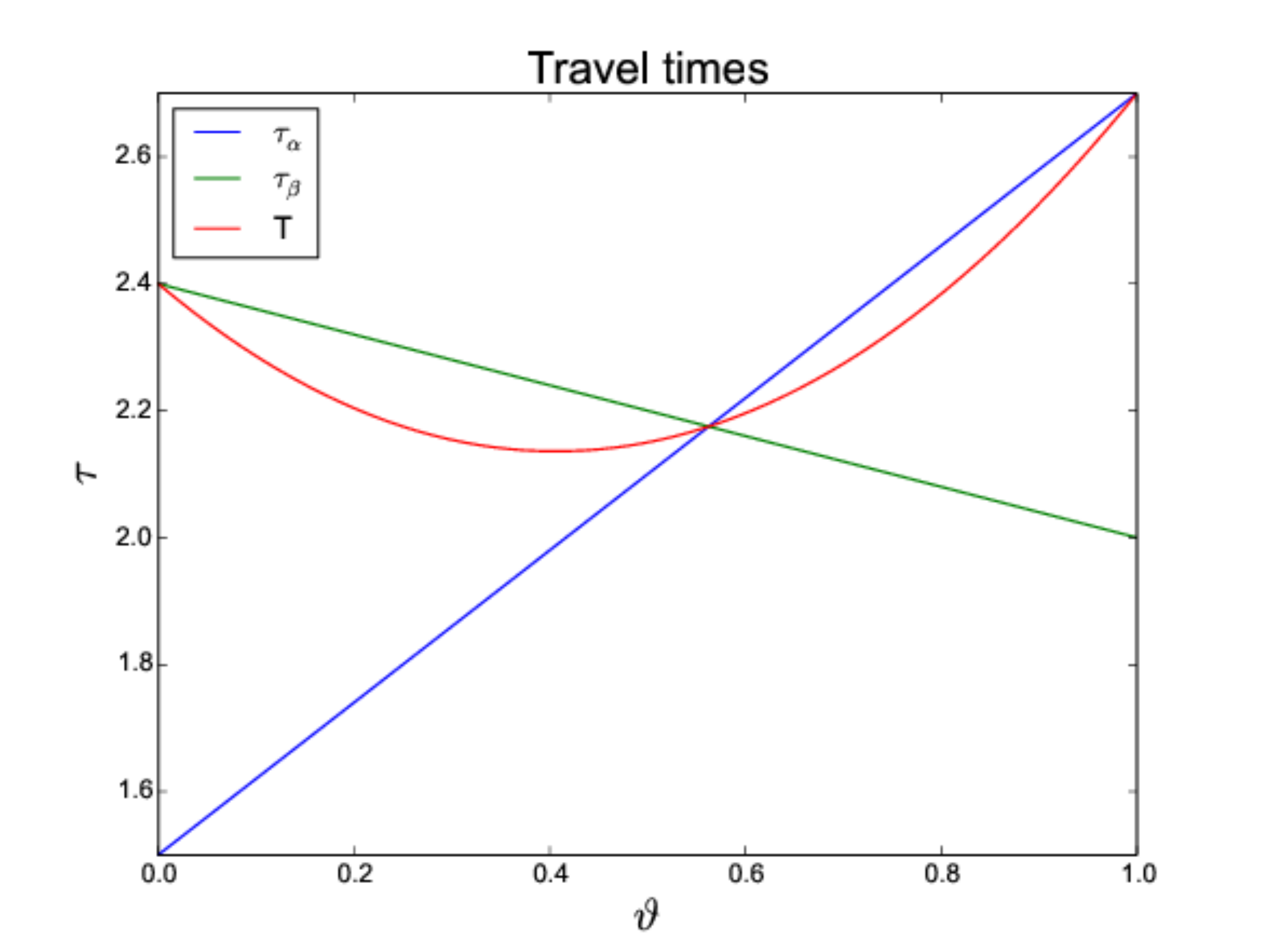}  
    \caption{Travel times of the situation described in
      Example~\ref{ex:Simple} with $\phi = 0.4$, so that $\theta^N =
      0.5625$ and $\theta_G = 0.40625$.}
    \label{fig:SimpleNum}
  \end{figure}
\end{example}

\subsection{The case of four roads}

Consider the network in Figure~\ref{fig:4Roads}.  The network is given
by two routes, denoted $\alpha$ and $\beta$, connecting $A$ and
$B$. The route $\alpha$ consists of roads $a$ and $b$, the route
$\beta$ consists of roads $c$ and $d$. Roads $a$ and $d$ have the same
length $\ell$ and the same fundamental diagram $q$. Similarly, roads
$b$ and $c$ share the same length $L$ and the same flow density
relation.  Traffic is always assumed to be unidirectional from $A$ to
$B$, and no obstructions, e.g., traffic lights, are encountered at the
junctions.

Along each road, the dynamics of traffic is described by the LWR
model~\eqref{eq:CL} with flux functions that lead to the travel times
\begin{displaymath}
  \tau_a (\theta) = \tau_d (\theta)
  \quad \mbox{ and } \quad
  \tau_b (\theta) = \tau_d (\theta) \,,
\end{displaymath}
so that the travel time $\tau_\alpha (\theta)$ along the route
$\alpha$ and $\tau_\beta (1-\theta)$ along the route $\beta$, are
\begin{displaymath}
  \tau_\alpha (\theta) = \tau_a (\theta) + \tau_b (\theta)
  \quad \mbox{ and } \quad
  \tau_\beta (1-\theta) = \tau_a (1-\theta) + \tau_b (1-\theta) \,.
\end{displaymath}
Then, $\theta \mapsto \tau_\alpha (\theta)$ is (weakly) increasing,
while $\theta \mapsto \tau_\beta (1-\theta)$ is (weakly)
decreasing. Since $\tau_\alpha (1/2) = \tau_\beta (1/2)$, we have that
$\theta^N = 1/2$ is a Nash (and also Pareto) point for this system. It
is easy to verify that $(\theta^N,\theta^N)$ is also globally optimal,
since it is the argument that minimizes $T (\theta_1,\theta_2)$ over
the simplex $S^2$.

\subsection{The case of five roads}

We now introduce a new road in Figure~\ref{fig:4Roads}, passing to the
network described in Figure~\ref{fig:5Roads}.  The new road $e$, which
has the direction from $a$ to $d$, has length $\tilde \ell$ and its
dynamics is characterized by a flow function $\tilde q$
satisfying~\textbf{(q)}.  The presence of the road $e$ allows us to
consider the route $\gamma$ connecting $A$ to $B$ consisting of the
roads $a$, $e$, and $d$. For all $\theta_1, \theta_2 \in [0,1]$ such
that $\theta_1 + \theta_2 \leq 1$, we now let the inflow $\theta_1 \,
\phi$ enter $\alpha$, $\theta_2 \, \phi$ enter $\beta$ and the
remaining $(1-\theta_1-\theta_2) \, \phi$ enter $\gamma$. The travel
times along the three routes are then:
\begin{equation}
  \label{eq:tau}
  \begin{aligned}
    \tau_\alpha (\theta_1, \theta_2) & = \tau_a (1-\theta_2) + \tau_b
    (\theta_1) ,
    \\
    \tau_\beta (\theta_1, \theta_2) & = \tau_b (\theta_2) + \tau_a
    (1-\theta_1),
    \\
    \tau_\gamma (\theta_1, \theta_2) & = \tau_a (1-\theta_2) + \tau_e
    (1-\theta_1-\theta_2) + \tau_a (1-\theta_1) \,.
  \end{aligned}
\end{equation}
Observe that $\tau_\alpha (\theta, \theta) = \tau_\beta (\theta,
\theta)$.

The mean global travel time is
\begin{equation}
  \label{eq:T}
  T (\theta_1,\theta_2)
  =
  \theta_1 \, \tau_\alpha (\theta_1, \theta_2)
  +
  \theta_2 \, \tau_\beta (\theta_1, \theta_2)
  +
  (1-\theta_1-\theta_2) \, \tau_\gamma (\theta_1, \theta_2) \,.
\end{equation}

\subsection{The Braess paradox}
\label{subs:Braess}

We now compare the travel times obtained in the two cases described by
Figures~\ref{fig:4Roads} and~\ref{fig:5Roads}. To this end, observe
that the travel times $\tau_\alpha^{\IV}$ and $\tau_\beta^{\IV}$ in
the case of four roads, and referring to Figure~\ref{fig:4Roads}, are
obtained from those in the $5$ roads case setting
\begin{displaymath}
  \tau_\alpha^{\IV} (\theta) = \tau_\alpha (\theta, 1-\theta)
  \quad \mbox{ and } \quad
  \tau_\beta^{\IV} (\theta) = \tau_\beta (\theta, 1-\theta).
\end{displaymath}

\begin{theorem}
  \label{thm:4roads}
  Let the travel times $\tau_a, \tau_b, \tau_e \in \C0 ([0,1];
  \reali^+)$ be non decreasing and assume that $\tau_a$ or $\tau_b$
  are not constant. If the travel times defined in~\eqref{eq:tau}
  satisfy
  \begin{equation}
    \label{eq:Braess}
    \tau_\alpha (1/2, 1/2)
    <
    \tau_\gamma (0,0)
    <
    \tau_\alpha (0,0),
  \end{equation}
  then:
  \begin{itemize}
  \item $\theta^N \equiv (0,0)$ is the unique local Nash point for the
    network with five roads in Figure~\ref{fig:5Roads};
  \item the corresponding equilibrium time $\tau_\gamma (0,0)$ is
    worse than the globally optimal configuration for the network with
    four roads in Figure~\ref{fig:4Roads}.
  \end{itemize}
  Under the above conditions we have the occurrence of the Braess
  paradox.
\end{theorem}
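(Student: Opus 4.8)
The plan is to rewrite~\eqref{eq:Braess} as two elementary inequalities on $\tau_a,\tau_b,\tau_e$, to prove uniqueness of the Nash point by exhibiting one single profitable deviation, and finally to read off the comparison with the four--road optimum.

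First I would evaluate the route travel times~\eqref{eq:tau} at the points occurring in~\eqref{eq:Braess}. From $\tau_\alpha(\theta_1,\theta_2)=\tau_a(1-\theta_2)+\tau_b(\theta_1)$ and $\tau_\gamma(\theta_1,\theta_2)=\tau_a(1-\theta_2)+\tau_e(1-\theta_1-\theta_2)+\tau_a(1-\theta_1)$ one gets $\tau_\alpha(0,0)=\tau_a(1)+\tau_b(0)$, $\tau_\gamma(0,0)=2\,\tau_a(1)+\tau_e(1)$ and $\tau_\alpha(1/2,1/2)=\tau_a(1/2)+\tau_b(1/2)$. Hence the right inequality in~\eqref{eq:Braess} is precisely
\begin{equation*}
  \tau_a(1)+\tau_e(1)<\tau_b(0), \qquad (\star)
\end{equation*}
while the left one reads $\tau_a(1/2)+\tau_b(1/2)<2\,\tau_a(1)+\tau_e(1)$. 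If both $\tau_a$ and $\tau_b$ were constant these two would force $\tau_\alpha(0,0)<\tau_\gamma(0,0)<\tau_\alpha(0,0)$, which is exactly what the non-constancy assumption excludes.

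The heart of the matter is the claim that \emph{no state $\theta=(\theta_1,\theta_2)\in S^2$ with $\theta_1>0$ is a local Nash point}. I would prove this by the deviation that moves a small mass $\epsilon>0$ of drivers from route $\alpha$ to route $\gamma$: for $0<\epsilon<\theta_1$ the resulting state $(\theta_1-\epsilon,\theta_2)$ lies in $S^2$, all arguments below remain in $[0,1]$, and Definition~\ref{def:Nash} would require $\tau_\gamma(\theta_1-\epsilon,\theta_2)>\tau_\alpha(\theta)$. Yet monotonicity of $\tau_a$ and $\tau_e$ gives
\begin{equation*}
  \tau_\gamma(\theta_1-\epsilon,\theta_2)=\tau_a(1-\theta_2)+\tau_e(1-\theta_1-\theta_2+\epsilon)+\tau_a(1-\theta_1+\epsilon)\le\tau_a(1-\theta_2)+\tau_e(1)+\tau_a(1),
\end{equation*}
whereas $\tau_\alpha(\theta)=\tau_a(1-\theta_2)+\tau_b(\theta_1)\ge\tau_a(1-\theta_2)+\tau_b(0)$, and by $(\star)$ the latter is strictly larger than the former---a contradiction. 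The symmetry $\theta_1\leftrightarrow\theta_2$ exchanges routes $\alpha$ and $\beta$ and fixes $\tau_\gamma$, so the same argument rules out every state with $\theta_2>0$. Hence $(0,0)$ is the only candidate. That it actually is a Nash point is then immediate: at $(0,0)$ only $\gamma$ is used, the admissible deviations move a mass $\epsilon$ from $\gamma$ to $\alpha$ or to $\beta$, yielding the states $(\epsilon,0)$ and $(0,\epsilon)$, and
\begin{equation*}
  \tau_\alpha(\epsilon,0)=\tau_\beta(0,\epsilon)=\tau_a(1)+\tau_b(\epsilon)\ge\tau_a(1)+\tau_b(0)>2\,\tau_a(1)+\tau_e(1)=\tau_\gamma(0,0)
\end{equation*}
by $(\star)$, so Definition~\ref{def:Nash} holds (with, say, $\delta=1$). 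Therefore $\theta^N=(0,0)$ is the unique local Nash point, with equilibrium time $\tau_\gamma(0,0)$.

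For the last assertion I would use that $\tau_\alpha^{\IV}(\theta)=\tau_\alpha(\theta,1-\theta)$ and $\tau_\alpha(\theta,\theta)=\tau_\beta(\theta,\theta)$, so that at the balanced state $\theta=1/2$ the mean global travel time of the four--road network equals $\frac12\tau_\alpha^{\IV}(1/2)+\frac12\tau_\beta^{\IV}(1/2)=\tau_\alpha(1/2,1/2)$. Consequently the four--road global optimum is at most $\tau_\alpha(1/2,1/2)$, which by the left inequality in~\eqref{eq:Braess} lies strictly below the five--road equilibrium time $\tau_\gamma(0,0)$; this is the Braess paradox. I expect the only point needing genuine care to be the bookkeeping in the deviation argument---keeping every perturbed state in $S^2$ and all arguments of $\tau_a,\tau_b,\tau_e$ in $[0,1]$, which merely forces $\epsilon$ to be small---since once $(\star)$ has been isolated the rest is pure monotonicity.
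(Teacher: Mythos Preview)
Your argument is correct and follows a genuinely different route from the paper's. The paper first checks that $(0,0)$ is Nash via~\eqref{eq:Braess} and continuity, and then proves uniqueness by \emph{classifying equilibria}: it assumes an interior equilibrium $\bar\theta$, invokes symmetry to reduce to $\bar\theta_1=\bar\theta_2$, derives the equation $\tau_b(\bar\theta_1)-\tau_a(1-\bar\theta_1)=\tau_e(1-2\bar\theta_1)$ and reaches a contradiction via monotonicity; the corners $(1,0)$ and $(0,1)$ are then ruled out separately. You instead isolate the single scalar inequality $(\star)$ and exhibit one explicit profitable deviation (from $\alpha$ to $\gamma$) showing that \emph{every} state with $\theta_1>0$ fails the Nash condition, with symmetry handling $\theta_2>0$; no equilibrium classification, no interior/boundary split, and only the right half of~\eqref{eq:Braess} is needed for uniqueness. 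This is tidier and more robust, and it sidesteps the points where the paper's proof is terse (why an interior equilibrium must satisfy $\bar\theta_1=\bar\theta_2$ under mere weak monotonicity, and what happens on the boundary segments other than the two corners). The trade-off is that the paper's approach makes the structure of equilibria on the diagonal visible, which connects naturally with the later control result; your approach gets the theorem with less machinery but does not display that structure.
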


Observe that the point $\theta^P \equiv (1/2, 1/2)$ is the unique
Pareto point for the five roads networks.

Condition~\eqref{eq:Braess} allows us to construct several examples
illustrating the Braess paradox.

\begin{example}
  \label{ex:Braess}
  With the notation in Figure~\ref{fig:5Roads}, choose
  \begin{displaymath}
    \begin{array}{cccr@{\,}c@{\,}lr@{\,}c@{\,}l}
      \mbox{Road} & \mbox{Length} & \mbox{Density} & \mbox{Flow} &
      \\
      a,\, d & 1 & \rho & q (\rho) & = & \ln (1+\rho)
      \\
      b, \, c & 1 & R & Q (R) & = & R \, V
      & (V & \in & \reali)
      \\
      e & 1 & \tilde\rho & \tilde q (\tilde \rho) & = & \tilde \rho \, \tilde v
      & (\tilde v & \in & \reali)
    \end{array}
  \end{displaymath}
  Condition~\eqref{eq:Braess} then becomes
  \begin{displaymath}
    \frac{e^\phi -1}{\phi}
    <
    \frac{1}{V} - \frac{1}{\tilde v}
    <
    \frac{2}{\phi}(e^\phi-e^{\phi/2}),
  \end{displaymath}
  and, for any $\phi \in \left(0, \min\{\ln2, V, \tilde v\} \right]$,
  it can easily be met for suitable $V$, $\tilde v$, see
  Figure~\ref{fig:ex:Braess}.
  \begin{figure}[!h]
    \centering
    \includegraphics[width=0.49\textwidth, trim=90 0 35
    0]{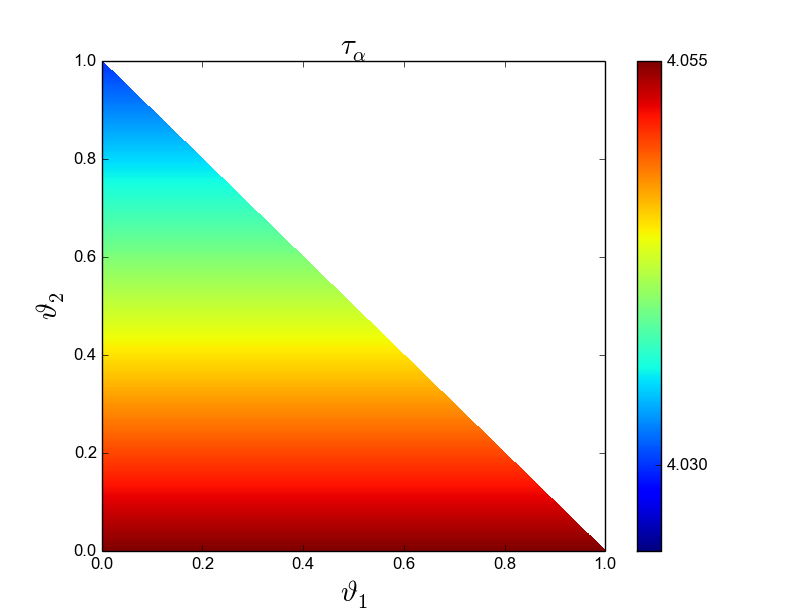}%
    \includegraphics[width=0.49\textwidth, trim=35 0 90 0]{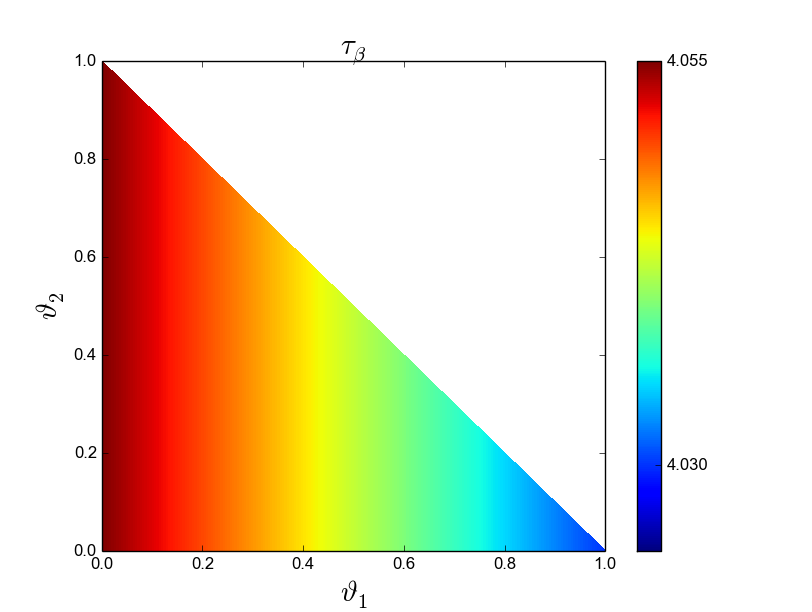}\\
    \includegraphics[width=0.49\textwidth, trim=90 0 35
    0]{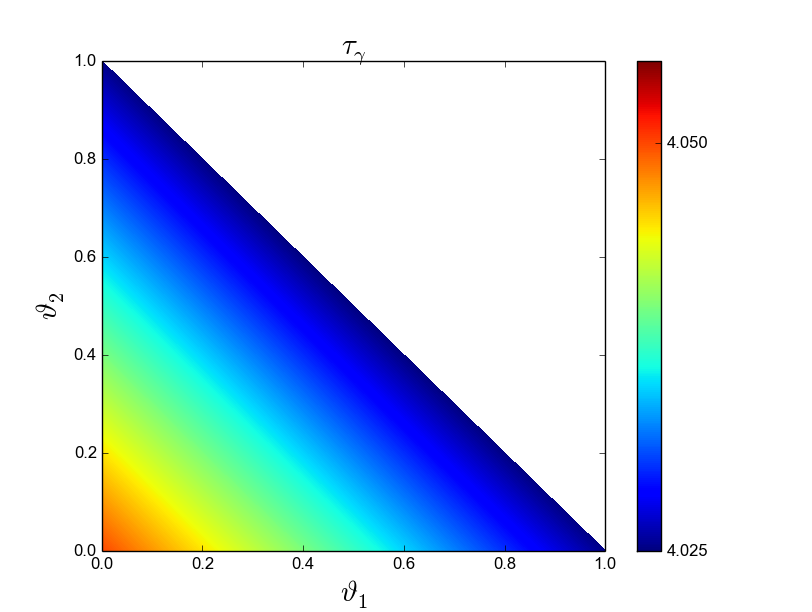}%
    \includegraphics[width=0.49\textwidth, trim=35 0 90 0]{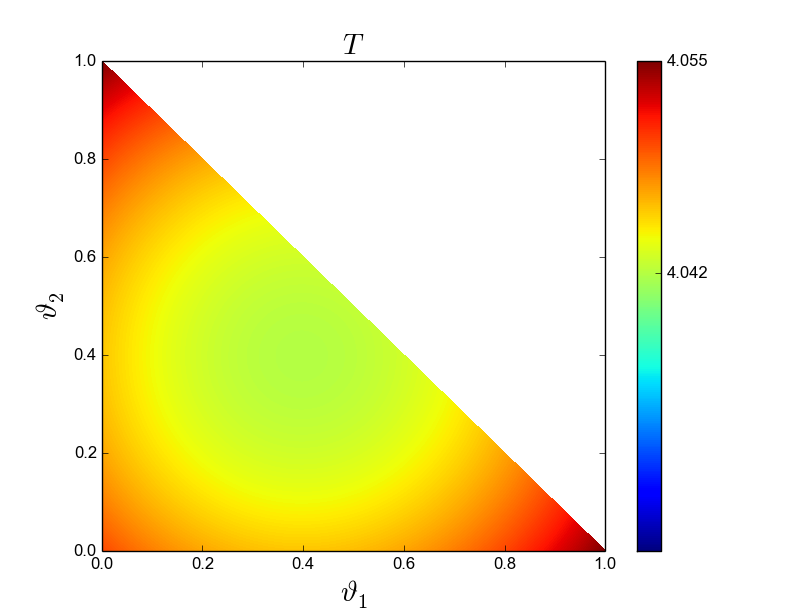}\\
    \caption{Contour plots of the travel times related to
      Example~\ref{ex:Braess} with $V=0.33$, $\tilde v=0.5$, $\ell = L
      = \tilde \ell = 1$, $\phi = 0.05$. Above, $\tau_\alpha$ and
      $\tau_\beta$; below $\tau_\gamma$ and the global travel time
      $T$. The color scales to the right are the same in all figures
      and display the maximal and minimal values of the diagrams to
      their left.}
    \label{fig:ex:Braess}
  \end{figure}
\end{example}

\section{Control theory for the novel road --- or how to cope with the
  Braess paradox}
\label{sec:control}

Our next aim is proving that in the case of the network in
Figure~\ref{fig:5Roads}, a carefully chosen speed limit imposed on the
novel road $\gamma$ makes the Nash optimal state coincide with the
globally optimal one.

We use the same notation as in Section~\ref{subs:Braess}, but we use
the travel time $\tilde \tau$ along the $e$ road as control
parameter. Equivalently, we impose that the speed along the road
$\gamma$ is $\tilde v$, so that
\begin{equation}
  \label{eq:taue}
  \tau_e (\theta_1,\theta_2) = \tilde \tau.
\end{equation}
The next theorem says that there exists an optimal control.
\begin{theorem}
  \label{thm:Control}
  Let the travel time $\tau_a,\tau_b \in \C0 ([0,1]; \reali^+)$ be non
  decreasing and convex, one of the two being strictly convex. Then,
  there exists a constant travel time $\tilde \tau \in \reali^+$ such
  that the network in Figure~\ref{fig:5Roads} admits a partition
  $(\theta_*, \theta_*)$ which is a Nash optimal state and also
  globally minimizes the mean global travel time.
\end{theorem}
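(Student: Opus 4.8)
The plan is to exploit the symmetry of the problem along the diagonal $\theta_1 = \theta_2$ and reduce the search for a suitable control $\tilde\tau$ to a one-dimensional fixed-point (or intermediate-value) argument. First I would restrict attention to symmetric partitions $(\theta,\theta)$, using that $\tau_\alpha(\theta,\theta) = \tau_\beta(\theta,\theta)$ already observed after \eqref{eq:tau}; write $g(\theta) = \tau_\alpha(\theta,\theta) = \tau_a(1-\theta) + \tau_b(\theta)$ for the common travel time of routes $\alpha,\beta$ and $h_{\tilde\tau}(\theta) = \tau_\gamma(\theta,\theta) = 2\,\tau_a(1-\theta) + \tilde\tau$ for the travel time of route $\gamma$, which now carries $(1-2\theta)\phi$ of the inflow. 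Both $g$ and $h_{\tilde\tau}$ are continuous, and by the convexity hypothesis on $\tau_a,\tau_b$ (one strictly convex) $g$ is strictly convex on $[0,1/2]$; in particular $g$ has a unique minimizer $\theta_*$ on $[0,1/2]$, which by Proposition~\ref{prop:Convexity} is exactly the global optimum restricted to the diagonal, and one checks that minimizing $T$ over $S^2$ is attained on this diagonal by the same symmetry/convexity.

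Next I would compute the global optimizer $\theta_*$ for the full network with the control present and show it lies in the symmetric configuration. The mean global travel time \eqref{eq:T} evaluated at $(\theta,\theta)$ is $T(\theta,\theta) = 2\theta\,g(\theta) + (1-2\theta)\,h_{\tilde\tau}(\theta)$; by Proposition~\ref{prop:Convexity} $T$ is convex on $S^2$, and exchanging $\theta_1 \leftrightarrow \theta_2$ is a symmetry, so the minimum is attained at a symmetric point — giving a well-defined $\theta_*(\tilde\tau)$. The key computation is the stationarity condition at $\theta_*$: differentiating along the diagonal and using that the route-$\gamma$ share is $1-2\theta$, the first-order condition reads (schematically) $g(\theta_*) - h_{\tilde\tau}(\theta_*)$ plus a term involving derivatives of $\tau_a,\tau_b$ equals zero, which I would manipulate into the form $h_{\tilde\tau}(\theta_*) = g(\theta_*)$ precisely when $\tilde\tau$ is chosen so that the marginal-cost terms cancel the route-time gap. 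That is, I would solve for the value $\tilde\tau = \tilde\tau_*$ making the globally optimal symmetric $\theta_*$ satisfy $\tau_\alpha(\theta_*,\theta_*) = \tau_\beta(\theta_*,\theta_*) = \tau_\gamma(\theta_*,\theta_*)$.

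Having produced such a $\tilde\tau_*$, I would then verify that $(\theta_*,\theta_*)$ is in fact a local Nash point in the sense of Definition~\ref{def:Nash}, i.e.\ that no $\epsilon$-shift $e_j - e_k$ decreases a route time below $\tau_{\gamma_k}(\theta^N)$. Here I would use monotonicity: $\tau_\alpha$ is nondecreasing in $\theta_1$ and nonincreasing in $\theta_2$ (via $\tau_a(1-\theta_2)$), and similarly for $\tau_\beta,\tau_\gamma$; shifting mass \emph{onto} a route cannot lower that route's time, so a driver switching to any route $j$ from route $k$ finds $\tau_{\gamma_j}(\theta^N + \epsilon e_j - \epsilon e_k) \geq \tau_{\gamma_j}(\theta^N) = \bar\tau = \tau_{\gamma_k}(\theta^N)$, with strictness because one of $\tau_a,\tau_b$ is strictly convex (hence strictly increasing on the relevant range, by assumption \textbf{(q)} and Lemma~\ref{lem:TravelTime}). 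This simultaneously shows the Nash point coincides with the global optimum. The existence of $\tilde\tau_* \in \reali^+$ itself I would get by an intermediate-value argument: as $\tilde\tau$ ranges over $\reali^+$, the function $\tilde\tau \mapsto h_{\tilde\tau}(\theta_*(\tilde\tau)) - g(\theta_*(\tilde\tau))$ is continuous (continuity of $\theta_*$ in $\tilde\tau$ follows from strict convexity of $T$), is negative for $\tilde\tau$ small (route $\gamma$ cheap, everyone wants it, forcing $\theta_* = 0$ with $\tau_\gamma < \tau_\alpha$) and positive for $\tilde\tau$ large (route $\gamma$ unused, $\theta_* = 1/2$ with $\tau_\gamma > \tau_\alpha$), so it has a zero.

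The main obstacle I expect is the boundary-case bookkeeping: the global minimizer $\theta_*(\tilde\tau)$ may sit on the boundary of $S^2$ (either $\theta_* = 1/2$, so route $\gamma$ is unused, or $\theta_* = 0$, so routes $\alpha,\beta$ are unused), and at such corners the first-order characterization becomes an inequality, so equality of the \emph{relevant} travel times (Definition~\ref{def:Eq}) must be argued carefully rather than by plainly setting a derivative to zero; one must also rule out that the intermediate-value zero lands exactly where the interior/boundary regimes switch in a degenerate way. Establishing continuity and the correct monotone behaviour of $\theta_*(\tilde\tau)$ across these regime changes, and checking that the Nash inequalities hold with the right strictness when $\theta_*$ is on the boundary, is where the real work lies; the strict convexity of one of $\tau_a,\tau_b$ is precisely what should make the zero of the intermediate-value function, and hence the optimal control, unique and the coincidence robust.
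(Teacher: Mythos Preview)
Your plan is correct and uses the same core ingredients as the paper: symmetry reduces the problem to the diagonal $\theta_1=\theta_2$, strict convexity of one of $\tau_a,\tau_b$ gives a unique continuously varying global minimizer, and a one-dimensional topological argument produces the desired control $\tilde\tau_*$; the Nash verification via monotonicity is also exactly what the paper does in its Lemma~\ref{lem:TT}.

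The organizational difference is that the paper packages the argument as a \emph{fixed-point} rather than an intermediate-value argument. It first defines $\Theta\colon \tilde\tau \mapsto \theta_*(\tilde\tau)$ (your global minimizer map) in Lemma~\ref{lem:Theta}, then separately defines $\tilde T\colon \theta \mapsto \max\{0,\tau_b(\theta)-\tau_a(1-\theta)\}$, the control that makes $(\theta,\theta)$ a Nash point, in Lemma~\ref{lem:TT}. The composition $\Upsilon = \Theta\circ\tilde T$ is a continuous self-map of $[0,1/2]$, and any fixed point $\theta_*$ gives the desired $\tilde\tau_* = \tilde T(\theta_*)$. This buys you exactly the boundary bookkeeping you flagged as the main obstacle: a continuous self-map of a compact interval \emph{always} has a fixed point, so there is no need to check the signs of $h_{\tilde\tau}(\theta_*(\tilde\tau))-g(\theta_*(\tilde\tau))$ at the endpoints of the $\tilde\tau$-range, and the cases $\theta_*\in\{0,1/2\}$ are absorbed automatically. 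Your direct IVT in $\tilde\tau$-space is equivalent in the interior regime but forces you to argue those endpoint signs separately, and (as you suspected) the claim ``$F(\tilde\tau)<0$ for small $\tilde\tau$'' is not automatic without an extra hypothesis relating $\tau_a(1)$ and $\tau_b(0)$; the paper's fixed-point formulation sidesteps that.
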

Thus, by carefully selecting the travel time, or, equivalently,
adjusting the maximum speed, one can avoid the occurrence of the
Braess paradox. Moreover, the Nash equilibrium is steered to become
globally optimal.

\appendix
\section{Technical details}

\begin{lemma}
  \label{lem:Speed}
  Let $q$ satisfy~\textbf{(q)}. Then, the speed $v = v (\rho)$ defined
  by
  \begin{displaymath}
    v (\rho)
    =
    \left\{
      \begin{array}{lr@{\;}c@{\;}l}
        q' (0) & \rho & = & 0
        \\
        q (\rho) / \rho & \rho & > & 0
      \end{array}
    \right.
  \end{displaymath}
  is well-defined, continuous in $[0, \rho_m]$, strictly positive and
  weakly decreasing.
\end{lemma}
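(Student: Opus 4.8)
The plan is to establish the four claimed properties one at a time, the only point requiring any care being the behaviour at $\rho = 0$.

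First I would treat well-definedness and continuity. On $(0,1]$ the expression $q(\rho)/\rho$ is a quotient of $\C3$ functions with non-vanishing denominator, so $v$ is there of class $\C3$, in particular continuous. At $\rho = 0$ I would use $q(0) = 0$ to write $q(\rho)/\rho = \bigl(q(\rho)-q(0)\bigr)/(\rho-0)$, whose limit as $\rho\to 0^+$ is $q'(0)$, which is precisely the value assigned to $v(0)$; hence $v$ is continuous on all of $[0,1]$. (A first-order Taylor expansion with integral remainder would in fact show that $v$ extends to a $\C2$ map, but mere continuity suffices here.)

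Next, for strict positivity: since $q' > 0$ on $[0,1]$ and $q(0) = 0$, integration gives $q(\rho) = \int_0^\rho q'(s)\,ds > 0$ for $\rho \in (0,1]$, so $v(\rho) = q(\rho)/\rho > 0$; and $v(0) = q'(0) > 0$ by \textbf{(q)}. For weak monotonicity I would differentiate on $(0,1]$, obtaining $v'(\rho) = \bigl(\rho\,q'(\rho) - q(\rho)\bigr)/\rho^2$. Writing $g(\rho) = \rho\,q'(\rho) - q(\rho)$, one has $g(0) = 0$ and $g'(\rho) = \rho\,q''(\rho) \le 0$ since $q'' \le 0$, so $g \le 0$ on $[0,1]$ and therefore $v' \le 0$ on $(0,1]$; combined with continuity at $0$, this yields that $v$ is weakly decreasing on $[0,1]$. (Alternatively, concavity of $q$ together with $q(0) = 0$ directly gives that $\rho \mapsto q(\rho)/\rho$ is non-increasing, by comparing the secant slopes from the origin.)

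I do not expect any genuine obstacle here, as everything reduces to elementary one-variable calculus. The only step to watch is the endpoint $\rho = 0$: one must confirm that the value $q'(0)$ inserted by hand in the definition agrees both with $\lim_{\rho\to 0^+} q(\rho)/\rho$ and with the positivity and monotonicity deduced on the open interval, so that the glued function genuinely inherits these properties on the closed interval $[0,1]$.
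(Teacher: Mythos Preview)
Your proof is correct and follows essentially the same route as the paper: continuity at $0$ via the difference quotient (the paper phrases this as l'H\^opital), and $v'\le 0$ from $v'(\rho)=(\rho q'(\rho)-q(\rho))/\rho^2$ together with concavity of $q$. The paper invokes the concavity inequality $q'(0)\ge q(\rho)/\rho\ge q'(\rho)$ directly to get the sign of the numerator, whereas you differentiate the numerator $g(\rho)=\rho q'(\rho)-q(\rho)$; these are equivalent, and you even mention the secant-slope argument as an alternative. Your treatment of strict positivity is in fact more explicit than the paper's.
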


\begin{proof}
  Continuity follows from l'H\^opital's rule. By straightforward
  computation we find
  \begin{displaymath}
    v' (\rho)
    =
    \begin{cases}
      \frac{\rho \, q' (\rho) - q (\rho)}{\rho^2} & \rho > 0\,,
      \\
      \frac{1}{2} \, q'' (0) & \rho = 0\,,
    \end{cases}
    \qquad
    v'' (\rho)
    =
    \begin{cases}
      \frac{q'' (\rho)}{\rho} - 2 \frac{q' (\rho)}{\rho^2} + 2 \frac{q
        (\rho)}{\rho^3} & \rho > 0\,,
      \\
      \frac{1}{3} q''' (0) & \rho = 0 \,.
    \end{cases}
  \end{displaymath}
  By the concavity of $q$, we have $q' (0) \geq q (\rho)/\rho \geq q'
  (\rho)$, implying that $v' \leq 0$.
\end{proof}

\begin{lemma}
  \label{lem:PropertiesRho}
  Let $q$ satisfy~\textbf{(q)}. Then, the map $\rho \colon \theta
  \mapsto \rho (\theta)$ defined by
  \begin{displaymath}
    q\left(\rho (\theta)\right) = \theta\phi
  \end{displaymath}
  satisfies:
  \begin{enumerate}
  \item $\rho \in \C2 ([0,1]; [0,1])$ and $\rho (0) = 0$;
  \item $\rho' (\theta) > 0$ and $\rho'' (\theta) > 0$ for all $\theta
    \in [0,1]$;
  \item if $q$ is strictly convex, then $\rho'' (\theta) > 0$ for all
    $\theta \in [0,1]$.
  \end{enumerate}
\end{lemma}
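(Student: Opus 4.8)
The plan is to solve the defining relation $q(\rho(\theta)) = \theta\phi$ explicitly by inverting $q$, and then to read off all the stated properties from the first two derivatives of $\rho$ obtained by implicit differentiation. Since assumption \textbf{(q)} gives $q' > 0$ on the \emph{closed} interval $[0,1]$, the map $q$ is a strictly increasing $\C3$ bijection of $[0,1]$ onto $[0, q(1)] = [0,\phi]$. First I would invoke the inverse function theorem: because $q'$ never vanishes, $q^{-1}$ is of class $\C3$ on $[0,\phi]$, and therefore $\rho(\theta) = q^{-1}(\theta\phi)$ is well-defined and (at least) of class $\C2$ on $[0,1]$, with $\rho([0,1]) = [0,1]$ since $\theta\phi$ ranges over $[0,\phi]$. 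The normalization $q(0)=0$ gives immediately $\rho(0) = q^{-1}(0) = 0$. This settles item (1).

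For the derivatives I would differentiate the identity $q(\rho(\theta)) = \theta\phi$ in place of differentiating $q^{-1}$ directly, since the chain rule keeps the computation transparent. A first differentiation yields $q'(\rho(\theta))\,\rho'(\theta) = \phi$, whence
\[
  \rho'(\theta) = \frac{\phi}{q'(\rho(\theta))} > 0,
\]
with positivity immediate from $\phi > 0$ and $q' > 0$. Differentiating the relation $q'(\rho)\,\rho' = \phi$ once more gives $q''(\rho)\,(\rho')^2 + q'(\rho)\,\rho'' = 0$, so that
\[
  \rho''(\theta) = -\,\frac{q''(\rho(\theta))\,\bigl(\rho'(\theta)\bigr)^2}{q'(\rho(\theta))}.
\]
Here the denominator is positive and $(\rho')^2 > 0$, so the sign of $\rho''$ is exactly the opposite of that of $q''$.

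Both remaining items then follow at once. Under \textbf{(q)} one only knows $q'' \leq 0$, which forces $\rho'' \geq 0$, i.e.\ $\rho$ is (weakly) convex; this is item (2). If in addition $q$ is strictly concave, i.e.\ $q'' < 0$ throughout $[0,1]$ (the strengthening of \textbf{(q)} relevant to item (3)), then $\rho'' > 0$ strictly, because $(\rho')^2$ is strictly positive. I do not expect any genuine obstacle here: the only point needing care is that $q'$ stay bounded away from zero on the whole compact interval $[0,1]$, so that the inverse function theorem applies uniformly and the two quotients above remain well-defined up to the endpoints — in particular at $\theta = 0$, where $\rho = 0$ but $q'(0) > 0$ removes any apparent singularity. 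Everything else is routine implicit differentiation.
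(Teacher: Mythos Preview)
Your argument is correct and matches the paper's own proof almost verbatim: the paper also differentiates the identity $q(\rho(\theta))=\theta\phi$ to obtain $\rho'=\phi/q'(\rho)>0$ and $\rho''=-\phi^2 q''(\rho)/(q'(\rho))^3\ge 0$, with strict inequality when $q''<0$. You were in fact more careful than the paper in two respects: you spelled out the inverse function theorem step (the paper just calls existence and regularity ``immediate''), and you silently repaired the evident typos in the statement --- item~(2) should read $\rho''\ge 0$, and item~(3) should say ``strictly \emph{concave}'' --- exactly as the paper's own proof confirms.
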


\begin{proof}
  Existence and regularity of $\rho$ are immediate. Moreover,
  by~\textbf{(q)} and $q (\rho (\theta)) = \theta \, \phi$, it follows
  that
  \begin{displaymath}
    \rho (0) = 0
    \,,\qquad
    \rho' (\theta) = \frac{\phi}{q'\left(\rho (\theta)\right)}  > 0
    \quad \mbox{ and } \quad
    \rho'' (\theta)
    =
    -
    \frac{\phi^2 \, q''\left(\rho (\theta)\right)}{\left(q'\left(\rho (\theta)\right)\right)^3} \geq 0\,,
  \end{displaymath}
  and the latter inequality is strict as soon as $q$ is strictly
  convex.
\end{proof}

\begin{lemma}
  \label{lem:PropertiesV}
  Let $q$ satisfy~\textbf{(q)}. Then, the map $\theta \mapsto
  1/v\left(\rho (\theta)\right)$ is weakly increasing. If, moreover,
  $q''' (\rho) \leq 0$ for all $\rho \in [0,1]$, then the map $\theta
  \mapsto 1/v\left(\rho (\theta)\right)$ is convex.
\end{lemma}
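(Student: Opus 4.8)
The plan is to transfer the whole statement onto the map $\rho\colon\theta\mapsto\rho(\theta)$ of Lemma~\ref{lem:PropertiesRho}. First I would use the defining relation $q(\rho(\theta))=\theta\phi$ to rewrite, for $\theta>0$,
\[
  \frac{1}{v(\rho(\theta))}=\frac{\rho(\theta)}{q(\rho(\theta))}=\frac{\rho(\theta)}{\phi\,\theta},
\]
and then factor $\rho(\theta)=\theta\,\psi(\theta)$ with $\psi(\theta):=\int_0^1\rho'(s\theta)\,ds$, which is legitimate since $\rho(0)=0$ by Lemma~\ref{lem:PropertiesRho}. Thus $\theta\mapsto 1/v(\rho(\theta))=\psi(\theta)/\phi$ extends across $\theta=0$ with value $\psi(0)/\phi=\rho'(0)/\phi=1/q'(0)=1/v(0)$, where $v(0)=q'(0)$ by Lemma~\ref{lem:Speed}. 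Since $q\in\C3$ with $q'>0$, the inverse $\rho=q^{-1}(\phi\,\cdot\,)$ is of class $\C3$, hence $\psi\in\C2$; this incidentally also supplies the $\C2$ regularity invoked in Lemma~\ref{lem:TravelTime}. It then remains to show that $h(\theta):=\psi(\theta)/\phi=\rho(\theta)/(\phi\theta)$ is weakly increasing and, under the hypothesis $q'''\le 0$, convex.

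For monotonicity I would argue directly from Lemmas~\ref{lem:Speed} and~\ref{lem:PropertiesRho}: $v$ is strictly positive and weakly decreasing while $\theta\mapsto\rho(\theta)$ is weakly increasing, so $\theta\mapsto v(\rho(\theta))$ is positive and weakly decreasing and its reciprocal is weakly increasing. (Equivalently, $(\rho(\theta)/\theta)'=(\theta\rho'(\theta)-\rho(\theta))/\theta^2\ge 0$ because $\rho$ is convex with $\rho(0)=0$.)

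The substantive part is convexity, and the key step will be to prove $\rho'''\ge 0$ on $[0,1]$. Differentiating $q(\rho(\theta))=\theta\phi$ successively yields $q'(\rho)\rho'=\phi$, then $q''(\rho)(\rho')^2+q'(\rho)\rho''=0$, and finally
\[
  q'(\rho)\,\rho'''=-\,q'''(\rho)\,(\rho')^3-3\,q''(\rho)\,\rho'\,\rho''.
\]
On the right-hand side the first term is $\ge 0$ by the assumption $q'''\le 0$, while the second is $\ge 0$ since $q''\le 0$ by \textbf{(q)} and $\rho',\rho''\ge 0$ by Lemma~\ref{lem:PropertiesRho}; as $q'(\rho)>0$ this gives $\rho'''\ge 0$. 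I would then conclude via $h''(\theta)=\phi^{-1}\psi''(\theta)=\phi^{-1}\int_0^1 s^2\rho'''(s\theta)\,ds\ge 0$. A variant avoiding differentiation under the integral sign: set $M(\theta):=\theta^2\rho''(\theta)-2\theta\rho'(\theta)+2\rho(\theta)$; then $M(0)=0$ and $M'(\theta)=\theta^2\rho'''(\theta)\ge 0$, so $M\ge 0$, and since a direct computation gives $\phi\,\theta^3\,h''(\theta)=M(\theta)$ one obtains $h''\ge 0$ on $(0,1]$, hence convexity on $[0,1]$ by continuity.

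I expect the main obstacle to be purely computational — carrying out the third differentiation cleanly and tracking the signs of the three summands — together with the minor endpoint issue at $\theta=0$, where $\rho(\theta)/\theta$ is formally $0/0$; the factorization $\rho(\theta)=\theta\psi(\theta)$ disposes of the latter and also gives the needed smoothness. It is worth noting that convexity of $\rho$ with $\rho(0)=0$ is by itself insufficient (take $\rho(\theta)=\theta^{3/2}$, for which $\rho(\theta)/\theta=\sqrt\theta$ is concave), so the third-order assumption $q'''\le 0$ genuinely enters the argument.
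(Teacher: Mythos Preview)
Your argument is correct and genuinely different from the paper's. The paper proceeds by brute-force differentiation of $1/v(\rho(\theta))$ as a composition, expressing the second derivative entirely in terms of $q$ and its derivatives; the resulting bracket
\[
  f(\rho)=\tfrac12\Bigl(\tfrac{q(\rho)}{q'(\rho)}\Bigr)^{2}q''(\rho)+q(\rho)-\rho\,q'(\rho)
\]
is then shown to be nonpositive by checking $f(0)=0$ and computing $f'(\rho)\le 0$ termwise. Your route is leaner: the identity $1/v(\rho(\theta))=\rho(\theta)/(\phi\theta)$ collapses the problem to convexity of $\theta\mapsto\rho(\theta)/\theta$, which you reduce to $\rho'''\ge 0$ via three differentiations of the implicit relation $q(\rho(\theta))=\theta\phi$. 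This makes the role of the hypothesis $q'''\le 0$ transparent --- it is exactly what is needed to give $\rho'''\ge 0$ --- and avoids the heavy algebra of the paper's $f'(\rho)$. You also handle the endpoint $\theta=0$ explicitly through the factorisation $\rho(\theta)=\theta\psi(\theta)$, which the paper leaves implicit. The trade-off is that the paper's computation, while uglier, stays in the original variables and could in principle be inspected to see whether weaker hypotheses on $q$ might still force the sign of $f$; your argument ties convexity more tightly to the single scalar condition $\rho'''\ge 0$.
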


\begin{proof}
  We find
  \begin{displaymath}
    \frac{d}{d\theta} \left(\frac{1}{v\left(\rho (\theta)\right)}\right)
    =
    - \frac{v'\left(\rho (\theta)\right) \, \rho' (\theta)}{\left(v\left(\rho (\theta)\right)\right)^2}
    \geq
    0 \,.
  \end{displaymath}
  Moreover, using the explicit expressions above,
  \begin{align*}
    \frac{d}{d\theta} \left(\frac{1}{v\left(\rho
          (\theta)\right)}\right) & = - \frac{v'\left(\rho
        (\theta)\right) \, \rho' (\theta)}{\left(v\left(\rho
          (\theta)\right)\right)^2}
    \\
    & = -\frac{ \frac{\rho (\theta) \, q'\left(\rho (\theta)\right) -
        q\left(\rho (\theta)\right)}{\left(\rho (\theta)\right)^2} \;
      \frac{\phi}{q'\left(\rho
          (\theta)\right)}}{\frac{\left(q\left(\rho
            (\theta)\right)\right)^2}{\left(\rho (\theta)\right)^2}}
    \\
    & = \left( \frac{1}{q\left(\rho (\theta)\right) \, q'\left(\rho
          (\theta)\right)} - \frac{\rho (\theta)}{\left(q\left(\rho
            (\theta)\right)\right)^2} \right) \phi,
    \\[5mm]
    \frac{d^2}{d\theta^2} \left(\frac{1}{v\left(\rho
          (\theta)\right)}\right) & = - 2 \, \frac{\rho' (\theta) \,
      \phi}{\left(q\left(\rho (\theta)\right)\right)^3}
    \\
    & \quad \times \left[ \frac{1}{2} \left(\frac{q\left(\rho
            (\theta)\right)}{q'\left(\rho (\theta)\right)}\right)^2
      q''\left(\rho (\theta)\right) + q\left(\rho (\theta)\right) -
      \rho (\theta) \, q'\left(\rho (\theta)\right) \right].
  \end{align*}
  Call $f (\rho) = \frac{1}{2} \left(\frac{q(\rho)}{q'(\rho)}\right)^2
  q''(\rho) + q(\rho) - \rho \, q'(\rho)$. Observe that $f (0) = 0$
  and
  \begin{displaymath}
    f' (\rho)
    =
    \frac{1}{2}
    \left(\frac{q(\rho)}{q'(\rho)}\right)^2
    q'''(\rho)
    +
    \frac{\left(q (\rho) - \rho \, q' (\rho)\right)q'' (\rho)}{q' (\rho)}
    -
    \frac{q (\rho) \, \left(q'' (\rho)\right)^2}{\left(q' (\rho)\right)^3}
    \leq
    0,
  \end{displaymath}
  thereby completing the proof.
\end{proof}
The assumption that $q''' (\rho) \leq 0$ is sufficient, but not
necessary, to obtain convexity of the travel time.

\begin{proof}[Proof of Proposition~\ref{prop:Convexity}.]
  Observe that if $f \in \C2 (\reali^+; \reali)$ is convex and
  increasing, then also the map $x \mapsto x\,f (x)$ is convex and
  increasing. By Lemma~\ref{lem:PropertiesRho}, for all $i=1,
  \ldots,m$, the map $\xi \mapsto \tau_{r_i} (\xi) \, \xi$ is convex
  for $\xi \in [0,1]$. Hence, also the map $\theta \mapsto \sum_i
  \tau_{r_i} (\theta_i) \, \theta_i$ is convex for $\theta \in
  [0,1]^n$. Since $\Gamma_{ij} \in \{0,1\}$, also the map $\theta
  \mapsto T (\theta)$ is convex.
\end{proof}

\begin{proof}[Proof of Theorem~\ref{thm:4roads}]
  By Definition~\ref{def:Nash}, the configuration $\theta^N$ with
  $\theta^N_1 = \theta^N_2 = 0$ is clearly an equilibrium, the only
  relevant time being the equilibrium
  \begin{displaymath}
    \bar \tau
    =
    \tau_\gamma (0,0)
    =
    2 \tau_a (1) + \tau_e (1)
    =
    2 \, \frac{\ell}{v\left(\rho (1)\right)}
    +
    \frac{\tilde \ell}{\tilde v\left(\tilde\rho (1)\right)} \,.
  \end{displaymath}
  By~\eqref{eq:Braess}, it is also a Nash point, since $\tau_a (0,0) =
  \tau_\beta (0,0) > \bar \tau$ and, by continuity, the same
  inequality holds in a neighborhood of $\theta^N$.

  Assume there exists an other equilibrium point $\bar \theta$ in the
  interior of $S^2$. Then, by symmetry, $\bar\theta_1 = \bar\theta_2$
  and, by Definition~\ref{def:Nash},
  \begin{equation}
    \label{eq:i}
    \tau_b (\bar\theta_1) - \tau_a (1-\bar\theta_1)
    =
    \tau_e (1-2\bar\theta_1) \,.
  \end{equation}
  By assumption, the left-hand side above is a strictly increasing
  function of $\theta_1$, while the right-hand side is weakly
  decreasing, so that
  \begin{align*}
    \tau_e (1-2 \bar\theta_1) & \leq \tau_e (1)
    \\
    & < \tau_b (0) + \tau_a (0) - 2 \tau_a (1) \qquad
    \mbox{by~\eqref{eq:Braess}}
    \\
    & \leq \tau_b (0) + \tau_a (0) - 2 \tau_a (0)
    \\
    & \leq \tau_b (0) - \tau_a (0)
    \\
    & \leq \tau_b (\bar\theta_1) - \tau_a (1-\bar\theta_1),
  \end{align*}
  which contradicts~\eqref{eq:i}.  To complete the proof of the
  uniqueness of the Nash points, consider the configuration $(0,1)$.
  In this case, the only relevant time is $\tau_\alpha (0,1)$ and
  \begin{displaymath}
    \tau_\alpha (1,0)
    =
    \tau_a (1) + \tau_b (1)
    >
    \tau_a (0) + \tau_b (0)
    =
    \tau_\beta (1,1),
  \end{displaymath}
  proving that $(1,0)$ is not a Nash point. The case of $(0,1)$ is
  entirely analogous.

  Finally, observe that the globally optimal time for the case of four
  roads is $\tau_\alpha (1/2,1/2) = \tau_b (1/2, 1/2)$ and the
  leftmost bound in~\eqref{eq:Braess} allows to complete the proof.
\end{proof}

\begin{lemma}
  \label{lem:Theta}
  Let the travel time $\tau_a,\tau_b \in \C0 ([0,1]; \reali^+)$ be non
  decreasing and convex, at least one of the two being strictly
  convex. Then, there exists a map $\Theta \in \C0 (\reali^+;[0,
  1/2])$ such that the partition $\left(\Theta (\theta), \Theta
    (\theta)\right)$ is the point of global minimum of the mean travel
  time $T$ defined in~\eqref{eq:T}, \eqref{eq:tau}, \eqref{eq:taue}
  over $S^n$.
\end{lemma}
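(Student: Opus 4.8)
The plan is to exploit the very special structure of the mean travel time $T$ in~\eqref{eq:T} once the control constraint~\eqref{eq:taue} is imposed: with $\tau_e \equiv \tilde\tau$ constant, the map $T$ \emph{decouples} into a sum of one-variable functions. Inserting~\eqref{eq:tau} and~\eqref{eq:taue} into~\eqref{eq:T} and collecting the terms containing $\tau_a(1-\theta_1)$, respectively $\tau_a(1-\theta_2)$, a direct computation gives
\begin{equation*}
  T (\theta_1,\theta_2)
  =
  g (\theta_1) + g (\theta_2) + (1-\theta_1-\theta_2)\,\tilde\tau,
  \qquad
  g (s) := (1-s)\,\tau_a (1-s) + s\,\tau_b (s) ,
\end{equation*}
which, setting $h_{\tilde\tau} (s) := g (s) - \tilde\tau\, s$, reads $T (\theta_1,\theta_2) = h_{\tilde\tau} (\theta_1) + h_{\tilde\tau} (\theta_2) + \tilde\tau$. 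Thus the whole problem reduces to a one-dimensional minimisation.

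Next I would establish that $g$, hence $h_{\tilde\tau}$, is strictly convex on $[0,1]$. Recall from the proof of Proposition~\ref{prop:Convexity} that $x \mapsto x\,f (x)$ is convex whenever $f$ is convex, non-decreasing and non-negative; a short elementary argument straight from the definition of convexity upgrades this to \emph{strict} convexity of $x \mapsto x\,f (x)$ when $f$ is in addition strictly convex (this does not need the $\C2$ regularity used elsewhere under~\textbf{(q)}). Applying this to $s \mapsto s\,\tau_b (s)$ and, after the affine change of variable $s \mapsto 1-s$, to $s \mapsto (1-s)\,\tau_a (1-s)$, and using that at least one of $\tau_a,\tau_b$ is strictly convex, I obtain that $g$ is strictly convex; subtracting the linear term $\tilde\tau\, s$ leaves $h_{\tilde\tau}$ strictly convex and continuous on $[0,1]$ for every $\tilde\tau \in \reali^+$.

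Then I would identify the minimiser. For a fixed value of the sum $\sigma = \theta_1 + \theta_2 \in [0,1]$, strict convexity of $h_{\tilde\tau}$ gives $h_{\tilde\tau} (\theta_1) + h_{\tilde\tau} (\theta_2) \geq 2\,h_{\tilde\tau} (\sigma/2)$, with equality only for $\theta_1 = \theta_2 = \sigma/2$. Hence the minimum of $T$ over the simplex is attained on the diagonal $\{(s,s)\colon s \in [0,1/2]\}$, where $T (s,s) = 2\,h_{\tilde\tau} (s) + \tilde\tau$. Since $h_{\tilde\tau}$ is strictly convex on $[0,1/2]$, it has a unique minimiser, which I call $\Theta (\tilde\tau) \in [0,1/2]$; then $\bigl(\Theta (\tilde\tau), \Theta (\tilde\tau)\bigr)$ is the unique point of global minimum of $T$ over the simplex, and moreover $\Theta (\tilde\tau) = \argmin_{s \in [0,1/2]} h_{\tilde\tau} (s)$.

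Finally, continuity of $\tilde\tau \mapsto \Theta (\tilde\tau)$ follows from a standard stability argument for the argmin of strictly convex functions depending continuously on a parameter. The map $(\tilde\tau,s) \mapsto h_{\tilde\tau} (s)$ is jointly continuous and, for $\tilde\tau_n \to \tilde\tau$, one has $h_{\tilde\tau_n} \to h_{\tilde\tau}$ uniformly on the compact interval $[0,1/2]$; if $\Theta (\tilde\tau_{n_k}) \to \ell$ along a subsequence, passing to the limit in $h_{\tilde\tau_{n_k}}\!\bigl(\Theta (\tilde\tau_{n_k})\bigr) \leq h_{\tilde\tau_{n_k}} (s)$ yields $h_{\tilde\tau} (\ell) \leq h_{\tilde\tau} (s)$ for all $s$, so $\ell = \Theta (\tilde\tau)$ by uniqueness; since every subsequence of $\bigl(\Theta (\tilde\tau_n)\bigr)$ has a further subsequence converging to $\Theta (\tilde\tau)$, the whole sequence converges. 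I expect the only genuinely delicate point to be the strict convexity of $g$ at the mere $\C0$ regularity assumed here, which forces a direct argument from the definition; the decoupling identity and the continuity of the argmin are routine.
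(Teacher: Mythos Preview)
Your proof is correct and takes a genuinely different route from the paper's. The paper invokes Proposition~\ref{prop:Convexity} for convexity of $T$, argues by symmetry that a minimiser lies on the diagonal, then differentiates $T(\theta,\theta)$ twice to obtain strict convexity along the diagonal, and appeals to the Implicit Function Theorem for the continuity of $\Theta$. Your approach instead uncovers the separable structure $T(\theta_1,\theta_2)=h_{\tilde\tau}(\theta_1)+h_{\tilde\tau}(\theta_2)+\tilde\tau$, which is a clean observation not made explicit in the paper; from it you get strict convexity of $T$ on the whole simplex (hence global uniqueness of the minimiser, not just uniqueness along the diagonal), and you replace the Implicit Function Theorem by an elementary argmin--stability argument. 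The main advantage of your route is that it works at the $\C0$ regularity actually stated in the lemma: the paper's computation of $\tau_a'$, $\tau_a''$, $\tau_b'$, $\tau_b''$ and its use of Proposition~\ref{prop:Convexity} and the Implicit Function Theorem tacitly require the $\C2$ smoothness coming from the LWR framework. The paper's argument is shorter if one is willing to assume that extra smoothness; yours is more self-contained and matches the hypotheses as written. Your only ``delicate'' step, the strict convexity of $s\mapsto s\,f(s)$ for $f$ non-negative, non-decreasing and strictly convex, is indeed provable directly from the definition: for $x<y$ and $z=\lambda x+(1-\lambda)y$ one has $z>0$ and
\[
\lambda x f(x)+(1-\lambda)y f(y)-z f(z)
> \lambda x f(x)+(1-\lambda)y f(y)-z\bigl(\lambda f(x)+(1-\lambda)f(y)\bigr)
=\lambda(1-\lambda)(y-x)\bigl(f(y)-f(x)\bigr)\ge 0,
\]
so your sketch goes through.
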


\begin{proof}
  The travel time $T$ is convex by
  Proposition~\ref{prop:Convexity}. By symmetry, its minimum is
  attained at a point $(\theta,\theta)$ and if $\theta \in (0, 1/2)$,
  then this point satisfies $\frac{d}{d\theta} T
  (\theta,\theta)=0$. Straightforward we find
  \begin{align*}
    T (\theta, \theta) & = 2(1-\theta) \, \tau_a (1-\theta) + 2\theta
    \, \tau_b (\theta) + (1-2\theta) \tilde \tau_e \, ,
    \\
    \frac{d}{d\theta} T (\theta,\theta) & = 2 \left( - \tau_a
      (1-\theta) - (1-\theta) \tau_a' (1-\theta) + \tau_b (\theta) +
      \theta \, \tau_b' (\theta) + \tilde\tau \right),
    \\
    \frac{d^2}{d\theta^2} T (\theta,\theta) & = 2\left( 2 \tau_a'
      (1-\theta) + (1-\theta) \tau_a'' (1-\theta) + 2 \tau_b' (\theta)
      + \theta \tau_b'' (\theta) \right),
  \end{align*}
  hence $\frac{d^2}{d\theta^2} T (\theta,\theta) > 0$, which shows
  that the map $\theta \mapsto T (\theta,\theta)$ is strictly
  convex. Hence it admits a unique point of minimum $\Theta
  (\tilde\tau)$ in $(0, 1/2)$. The standard Implicit Function Theorem
  ensures that $\Theta$ is continuous.
\end{proof}

\begin{lemma}
  \label{lem:TT}
  Let the travel time $\tau_a,\tau_b \in \C0 ([0,1]; \reali^+)$ be non
  decreasing and convex, at least one of the two being strictly
  convex. Then, there exists a map $\tilde T \in \C0 ([0, 1/2];
  \reali^+)$ such that assigning the travel time $\tilde T (\theta)$
  on road $e$ makes the configuration $(\theta,\theta)$ the unique
  local Nash point in the sense of Definition~\ref{def:Nash}.
\end{lemma}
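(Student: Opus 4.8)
The plan is to read off the required travel time $\tilde T(\theta)$ directly from the Nash condition at the symmetric configuration $(\theta,\theta)$. By Lemma~\ref{lem:TravelTime} (applied via the assumptions on $\tau_a,\tau_b$) and the explicit formulas in \eqref{eq:tau}, at the point $(\theta,\theta)$ one has $\tau_\alpha(\theta,\theta)=\tau_\beta(\theta,\theta)=\tau_a(1-\theta)+\tau_b(\theta)$, and the $\gamma$-route time is $\tau_\gamma(\theta,\theta)=2\tau_a(1-\theta)+\tilde\tau$. The natural candidate is to choose $\tilde\tau$ so that all three route times agree at $(\theta,\theta)$, i.e.
\begin{equation*}
  \tilde T(\theta) = \tau_b(\theta) - \tau_a(1-\theta),
\end{equation*}
provided this quantity is nonnegative; I will need to record that $\tilde T$ is indeed $\reali^+$-valued, which is where the hypothesis (together with a restriction of the range of $\theta$, or an additional normalization already implicit in the setup) enters. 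Continuity of $\tilde T$ on $[0,1/2]$ is immediate from $\tau_a,\tau_b\in\C0$.

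Next I would verify that, with this choice of $\tilde\tau=\tilde T(\theta)$, the configuration $(\theta,\theta)$ is an equilibrium in the sense of Definition~\ref{def:Eq}: if $\theta\in(0,1/2)$ all three routes are used and all three travel times coincide by construction; the boundary cases $\theta=0$ and $\theta=1/2$ are handled separately, since then one of the routes carries no traffic and only needs a one-sided inequality. Then I would check the Nash inequalities of Definition~\ref{def:Nash}. Shifting $\epsilon$ drivers between any two of the three routes changes the $\theta$-arguments entering \eqref{eq:tau} monotonically; since $\tau_a,\tau_b$ are non decreasing and $\tilde\tau$ is fixed, each route's travel time reacts in the "right" direction — a route that gains drivers gets (weakly) slower, and the route that loses drivers does not get slower in a way that helps — so no small reallocation is profitable. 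Here the strict convexity of one of $\tau_a,\tau_b$ upgrades "weakly" to "strictly" for the relevant directions and yields the strict inequality $\tau_{\gamma_j}(\theta^N+\epsilon e_j-\epsilon e_k)>\tau_{\gamma_k}(\theta^N)$ required by Definition~\ref{def:Nash}, which simultaneously gives \emph{uniqueness} of the local Nash point: any competing equilibrium would, by the symmetry argument already used in the proof of Theorem~\ref{thm:4roads} (equation~\eqref{eq:i}), have to satisfy $\tau_b(\bar\theta_1)-\tau_a(1-\bar\theta_1)=\tilde\tau$, and strict monotonicity of the left-hand side pins down $\bar\theta_1=\theta$.

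The main obstacle I anticipate is not the algebra but the case analysis at the endpoints and the sign of $\tilde T$: one must argue that for $\theta$ in the relevant range the prescribed control $\tilde T(\theta)=\tau_b(\theta)-\tau_a(1-\theta)$ is a legitimate (nonnegative, and compatible with the free-phase constraint \textbf{(q)}) travel time, and that at $\theta=1/2$ — which is the value that will ultimately be fed in by Lemma~\ref{lem:Theta} and used in the proof of Theorem~\ref{thm:Control} — the construction still produces a \emph{local} Nash point even though route $\gamma$ is then unused. I would dispatch this by treating $\theta=1/2$ as the limit $\theta\uparrow 1/2$ and checking the one-sided Nash inequality for moving a few drivers onto $\gamma$ directly, using $\tilde T(1/2)=\tau_b(1/2)-\tau_a(1/2)$ and monotonicity of $\tau_a,\tau_b$; all other reallocations reduce to the four-road analysis already carried out in Section~2.2 and in the proof of Theorem~\ref{thm:4roads}.
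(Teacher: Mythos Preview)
Your approach matches the paper's: set $\tilde T(\theta)=\tau_b(\theta)-\tau_a(1-\theta)$ so that all three route times coincide at $(\theta,\theta)$, then verify the Nash inequalities of Definition~\ref{def:Nash} directly from the monotonicity of $\tau_a,\tau_b$. The one point you flag but leave open---the sign of $\tilde T$---is not secured by any hypothesis or range restriction as you speculate; the paper simply truncates, setting $\tilde T(\theta)=\max\{\tau_b(\theta)-\tau_a(1-\theta),0\}$, and otherwise proceeds exactly as you outline (indeed more tersely: it does not spell out the uniqueness or endpoint discussion you sketch).
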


\begin{proof}
  Given $\theta \in [0,1/2]$, we seek a $\tilde \tau$ such that
  $(\theta,\theta)$ is an equilibrium point. To this aim, we solve
  \begin{displaymath}
    \tau_a (\theta,\theta) = \tau_b (\theta,\theta)
    \qquad \qquad
    \tau_a (\theta,\theta) = \tau_\gamma (\theta,\theta) \,.
  \end{displaymath}
  By symmetry consideration, to former equality is certainly satisfied
  for any $\theta \in [0, 1/2]$. The latter is equivalent to:
  \begin{displaymath}
    \tau_a (1-\theta) + \tau_b (\theta)
    =
    2 \tau_a (1-\theta) + \tilde \tau \,.
  \end{displaymath}
  Therefore, we set
  \begin{displaymath}
    \tilde T (\theta)
    =
    \begin{cases}
      \tau_b (\theta) - \tau_a (1-\theta) & \text{if $\tau_b (\theta)
        \geq \tau_a (1-\theta)$},
      \\
      0 & \text{if $\tau_b (\theta) < \tau_a (1-\theta)$}.
    \end{cases}
  \end{displaymath}
  By construction, $(\theta,\theta)$ is an equilibrium configuration
  in the sense of Definition~\ref{def:Eq}, once the travel time
  $\tilde \tau$ along the road $e$ is set equal end $\tilde T
  (\theta)$.

  When $\theta \in (0, 1/2)$, to prove that $(\theta,\theta)$ is a
  local Nash point, thanks to the present symmetries, it is sufficient
  to check that for all small $\epsilon>0$ we have
  \begin{align*}
    \tau_\alpha (\theta+\epsilon, \theta) & > \tau_\gamma
    (\theta,\theta),
    \\
    \tau_\alpha (\theta+\epsilon, \theta-\epsilon) & > \tau_\beta
    (\theta,\theta),
    \\
    \tau_\gamma (\theta-\epsilon, \theta) & > \tau_\alpha
    (\theta,\theta),
  \end{align*}
  or, equivalently,
  \begin{align*}
    \tau_b (\theta+\epsilon) - \tau_b (\theta) + \tau_a (1-\theta) -
    \tau_a(1-\theta-\epsilon) & > 0,
    \\
    \tau_a (1-\theta+\epsilon) - \tau_a (1-\theta-\epsilon) + \tau_b
    (\theta+\epsilon) - \tau_b (\theta-\epsilon) & > 0,
    \\
    \tau_a (1-\eta+\epsilon) - \tau_a (1-\theta) & > 0,
  \end{align*}
  and all these inequalities hold by the monotonicity of the travel
  times.
\end{proof}

\begin{proof}[Proof of Theorem~\ref{thm:Control}]
  Let $\Theta$ and $\tilde T$ be the maps defined in
  Lemma~\ref{lem:Theta} and Lemma~\ref{lem:TT}, respectively. Define
  \begin{displaymath}
    \Upsilon \colon [0,1/2] \to [0,1/2]
    \qquad \mbox{ by } \qquad
    \Upsilon = \Theta \circ \tilde T,
  \end{displaymath}
  and call $\theta_*$ a fixed point for $\Upsilon$. By construction,
  $(\theta_*, \theta_*)$ is a local Nash point, once $\tilde \tau_* =
  \tilde T (\theta_*)$ is fixed as the travel time along road $e$.
\end{proof}




\end{document}